\documentclass[a4paper,11pt]{amsart}

\usepackage{amsmath}       
\usepackage{amsthm}        
\usepackage{amsfonts}      
\usepackage{amssymb}       
\usepackage{mathtools}     
\usepackage{mathrsfs}       
\usepackage{dsfont}        
\usepackage{bbm}           
\usepackage{esvect}        
\usepackage{braket}        
\usepackage{cases}          
\usepackage{amscd}         

\usepackage{pifont}        
\usepackage{latexsym}      
\usepackage{savesym}       
\usepackage{ifthen}        
\usepackage[misc]{ifsym}   
\usepackage{verbatim}      

\usepackage[shortlabels,inline]{enumitem} 

\usepackage{graphicx}      
\usepackage{xcolor}        
\usepackage{tikz}          
\usepackage{tikz-cd}       
\usepackage{float}         
\usepackage{extarrows}     
\usepackage{tcolorbox}     
\usepackage{framed}        

\usepackage{geometry}      
\usepackage{indentfirst}   
\usepackage{needspace}     
\usepackage{dcolumn}       
\usepackage{titletoc}      

\usepackage[utf8]{inputenc}
\usepackage[T1]{fontenc}    

\usepackage[numbers,sort]{natbib} 
\usepackage{aliascnt}       

\usepackage[
pdfusetitle,
bookmarks=true,
pagebackref=false,      
colorlinks,
linkcolor=blue,         
citecolor=blue,         
urlcolor=black
]{hyperref}

\newcommand*{\dif}{\,\mathrm{d}}
\newcommand*{\dih}{\,\mathrm{d}h}
\newcommand{\R}{\mathbb{R}}
\newcommand{\mP}{\mathcal{P}}

\newcommand{\uid}{\overline{\dim}_\theta}
\newcommand{\lid}{\underline{\dim}_\theta}
\newcommand{\ubd}{\overline{\dim}_B}
\newcommand{\lbd}{\underline{\dim}_B}

\newcommand{\ca}{C_r^m}
\newcommand{\prm}{\phi_r^m}
\newcommand{\sk}{s_k(s,\mu,r)}
\newcommand{\ak}{\alpha_k(s,\mu,r)}

\numberwithin{equation}{section}

\allowdisplaybreaks
\theoremstyle{plain}

\newtheorem{thmm}{Theorem}[section]
\newtheorem{lem}{Lemma}[section]
\newtheorem{pro}{Proposition}[section]
\newtheorem{cor}{Corollary}[section]

\newtheorem{ex}{Example}[section]

\theoremstyle{definition}
\newtheorem{defi}{Definition}[section]
\newtheorem{remark}{Remark}[section]

\title{Box dimension profiles and intermediate dimensions}
\author{Shuqin Zhang}

\address{School of Mathematics and Information Science, Guangzhou University, Guangzhou, China}
\email{zhangshuqin@gzhu.edu.cn}

\begin{document}
\begin{abstract}
    For a non-empty bounded set $E\subset\R^d$, the upper and lower box dimensions of its orthogonal projections are constant onto almost all $m$-dimensional subspaces. These almost-sure values are called the upper and lower $m$-box dimension profiles of $E$, respectively. We study their relationship with intermediate dimensions, which interpolate between Hausdorff and box dimensions by restricting the relative sizes of covering sets. We establish several equivalent characterizations of both quantities and obtain upper and lower bounds for box dimension profiles in terms of intermediate dimensions, with the lower bounds also involving the upper Assouad spectrum. 
    We further derive explicit formulas for the box dimension profiles in terms of the box dimension and the initial growth rate of the intermediate dimensions, under a suitable condition on the quasi-Assouad dimension.
\end{abstract}
 		\maketitle
		 \section{Introduction}
Let $E$ be a subset of $\mathbb{R}^d$, and let $P_VE$ denote the orthogonal projection of $E$ onto a subspace $V\in G(d,m)$, where $G(d,m)$ is the Grassmannian of $m$-dimensional subspaces of $\mathbb{R}^d$ with $m\in\{1,\ldots,d\}$. The well-known Marstrand's projection theorem states that for every Borel set $E\subset\mathbb{R}^d$,
	 \begin{equation}\label{marstrand}
		 	\dim_H P_V(E)=\min\{m,\dim_H E\}
		 \end{equation}
		 for $\gamma_{d,m}$-almost all $V\in G(d,m)$, where $\gamma_{d,m}$ denotes the natural invariant probability measure on $G(d,m)$; see \cite{Mar54,Ma75}.
		 Marstrand's original proof of \eqref{marstrand} treated orthogonal projections of planar sets onto lines and was geometrically intricate. Kaufman \cite{Kau} later introduced a potential-theoretic approach, providing a simpler proof that has become the foundation for many subsequent generalizations. In 1975, Mattila \cite{Ma75} extended the theorem to higher dimensions.

A natural question is whether an analogue of \eqref{marstrand} holds for other notions of dimension. The corresponding formula fails for the upper and lower box dimensions, although their almost-sure constancy is retained. More precisely, for a non-empty bounded set $E\subset\mathbb{R}^d$, the upper and lower box dimensions of $P_VE$ are constant
for $\gamma_{d,m}$-almost all $V\in G(d,m)$; see
Falconer and Howroyd \cite{FH} and Howroyd \cite{Ho}. These almost-sure values are called the upper and lower $m$-box dimension profiles of $E$, denoted by $\ubd^m E$ and $\lbd^m E$, respectively. Moreover, Falconer and Howroyd \cite{FH} established that
 for Borel set $E$,
		\begin{equation}\label{bound}
			\frac{\ubd E}{1+\left(\frac1m-\frac1d\right)\ubd E}
			\le
			\ubd(P_VE)
			\le
			\min\{m,\ubd E\}
		\end{equation}
		for $\gamma_{d,m}$-almost all $V\in G(d,m)$.  The analogous inequalities also hold for the lower box dimension. J\"arvenp\"a\"a \cite{Ja} constructed an example showing that the lower bound in \eqref{bound} is sharp. 
In \cite{Fa}, Falconer  first gave alternative definitions of box dimension profiles in terms of capacities of $E$ with respect to certain kernels; see Definition~\ref{box-dimension-profiles}.

Intermediate dimensions were introduced in \cite{FFK} and have attracted considerable attention in recent years; see, for example, \cite{FFK,Ba,BK,BR,Fe,AMT,Fa22} and the survey \cite{Fa21}. The lower and upper $\theta$-intermediate dimensions, denoted by $\lid E$ and $\uid E$, interpolate between Hausdorff and box dimensions by restricting the relative sizes of sets in a cover. 
The parameter $\theta$ controls these restrictions, with $\theta=0$ giving the Hausdorff dimension and $\theta=1$ giving the corresponding box dimensions. The intermediate dimensions need not be continuous at $\theta=0$, so their right-hand limits at zero may differ from the Hausdorff dimension.
 For $1\leq m<d$, Angelini and Molter \cite[Corollary~3.4]{AM}  showed that

	\begin{align}\label{intro1}  
		\ubd^m(E)=m &\Longleftrightarrow \lim_{\theta\to0^+}\uid E\geq m,\notag\\ \lbd^m(E)=m &\Longleftrightarrow \lim_{\theta\to0^+}\lid E\geq m, \end{align}
	which	extended the result of Burrell, Falconer, and Fraser \cite[Corollary~6.4]{BFF}.

Falconer, Fraser, and Shmerkin \cite[Theorem~2.2]{FFS} obtained a lower bound for box dimension profiles involving the Assouad dimension and the upper Assouad spectrum. Their result can improve the lower bound in \eqref{bound} under suitable assumptions. In particular, they proved that if $\dim_{qA}E\leq m$, then the upper and lower box dimensions are preserved under orthogonal projections onto $\gamma_{d,m}$-almost all $m$-dimensional subspaces; that is,
 \begin{equation}\label{intro2}
			\ubd^mE=\ubd E
			\quad\text{and}\quad
			\lbd^mE=\lbd E.
		\end{equation}

		Projection theorems have also been studied for other notions of dimension. Burrell, Falconer, and Fraser \cite{BFF} showed that the intermediate dimensions of orthogonal projections are constant for almost all $m$-dimensional subspaces. They characterized these almost-sure values, called intermediate dimension profiles, using a three-piece potential kernel. For the Assouad dimension, however, this almost-sure constancy may no longer hold, as demonstrated by Fraser and Orponen \cite{FO}. Falconer, Fraser, and K\"{a}enm\"{a}ki
		\cite[Theorem~2.7 and Corollary~2.10]{FFK2}
		showed that almost-sure constancy may also fail for the
		quasi-Assouad dimension and the Assouad spectrum
		of projections of unbounded sets.

The equivalences in \eqref{intro1} characterize when a box dimension profile attains the maximal value $m$, whereas \eqref{intro2} gives a condition under which the box dimension is preserved. Alongside the existing quantitative estimates, we seek bounds that use the intermediate dimension, particularly when a profile is strictly smaller than both $m$ and the corresponding box dimension.
Since orthogonal projections do not increase distances and intermediate dimensions record how the efficiency of coverings changes with the range of admissible scales, it is natural to investigate how intermediate dimensions relate to box dimension profiles.

  In Section~\ref{sec-intermediate} and \ref{sec-profile}, we   establish equivalent characterizations of intermediate dimensions and box dimension profiles, see Proposition~\ref{alphaid} and Theorem~\ref{main}. These describe the dimensions through local mass estimates over varying ranges of scales.
In Section~\ref{sec:proof}, we obtain general upper and lower bounds for box dimension profiles in terms of intermediate dimensions. In particular, Theorem~\ref{upperbound2} shows that, for every non-empty bounded set $E\subset\R^d$ and every $m>0$,
\begin{equation*}
    \ubd^m E\leq
    \inf_{\theta\in(0,1]}
    \frac{m\uid E}{m\theta+(1-\theta)\uid E},
\end{equation*}
with an analogous bound for the lower box dimension profile. Theorem~\ref{lowerbound-spectrum} provides lower bounds that also incorporate the upper Assouad spectrum. Its consequence, Corollary~\ref{cor:assouad-spectrum-bound}, gives a lower bound at least as strong as that of Falconer, Fraser, and Shmerkin \cite{FFS}.
When  $N:=\frac{\kappa\ubd E}{\kappa-\ubd E}\geq\dim_{qA} E$, where $\kappa:=\liminf_{\theta\downarrow0}\frac{\uid E}{\theta}$,
these bounds yield an exact formula $$\ubd^m E
=\min\left\{\ubd E,\frac{m\kappa}{m+\kappa}\right\},$$ 
 see    Theorem~\ref{thm:initial-growth-profiles}.
  Finally, the inverted lattice and product examples in Examples~\ref{ex:inverted-lattice-profiles} and~\ref{ex:product-geometric-profiles} illustrate the exact formula in cases of dimension drop and dimension preservation.

  \section{Preliminaries}\label{preliminaries}
  Let $E$ be a non-empty bounded subset of $\R^d$, and
  let $\mP(E)$ denote the set of Borel probability measures supported on $E$.  We let $B(x,r)$ denote the closed ball centered at $x$ with radius $r$, and we let $|U|$ denote the diameter of the set $U$.
Let $N_r(E)$ denote the minimum number of sets of diameter at most $r$ required to cover $E$. Unless otherwise specified, all suprema \(\sup_x\) and infima \(\inf_x\) are taken over \(x\in\mathbb R^d\).  We adopt the convention $\log0=-\infty$ and $\sup\emptyset=0$.
We write $f(r)\lesssim g(r)$ if $f(r)\leq cg(r)$ for some constant $c>0$ independent of $r$.  

  \begin{defi}\label{box-dimension-profiles}
  	Let $E\subset\R^d$ be non-empty and compact, and let $m>0$ and $r>0$. The potential kernel is defined by
  	\begin{equation*}
  		\phi_r^m(x)=
  		\begin{cases}
  			1, & |x|\leq r,\\[1mm]
  			\left(\dfrac{r}{|x|}\right)^m, & |x|>r.
  		\end{cases}
  	\end{equation*}
  	The energy of $\mu\in\mP(E)$ with respect to $\prm$ is defined by  
  	\begin{equation*}
  		\int \phi_r^m\ast\mu(x)\dif\mu(x)
  		=\iint\phi_r^m(x-y)\dif\mu(y)\dif\mu(x),
  	\end{equation*}
  	where $\ast$ denotes convolution. The capacity $\ca(E)$ of $E$ is the reciprocal of the minimum energy achieved by probability measures on $E$, that is,
  	\begin{equation*}
  		\frac{1}{\ca(E)}
  		=\inf_{\mu\in\mP(E)}\int \phi_r^m\ast\mu(x)\dif\mu(x).
  	\end{equation*}
  	Since $\phi_r^m$ is continuous and $E$ is compact, the infimum is attained and $0<C_r^m(E)<\infty$. For a general non-empty bounded set $E\subset\R^d$, the capacity is defined to be that of its closure.
  	The lower and upper {\em $m$-box dimension profiles} of $E$ are defined by
\begin{equation*}
  		\lbd^m E=\varliminf_{r\rightarrow0}\frac{\log\ca(E)}{-\log r}
  		\quad\text{and}\quad
  		\ubd^m E=\varlimsup_{r\rightarrow0}\frac{\log\ca(E)}{-\log r}.
  	\end{equation*}
  \end{defi}

It was shown in \cite{Fa} that, for $m>d$, the upper and lower $m$-box dimension profiles coincide with the corresponding box dimensions of $E$. If $m\in\{1,\ldots,d-1\}$, the upper and lower box dimensions of the orthogonal projections of $E$ onto almost all $m$-dimensional subspaces coincide with the corresponding box dimension profiles.

Intermediate dimensions were introduced by Falconer, Fraser, and Kempton \cite{FFK} to interpolate between the Hausdorff and box dimensions. For $0<\theta\leq1$, we use covers satisfying $r\leq |U_i|\leq r^\theta$. This is equivalent to the original covering condition $\delta^{1/\theta}\leq |U_i|\leq\delta$ under the substitution $\delta=r^\theta$.
 \begin{defi}
 	Let $0<\theta\leq1$. The {\em lower $\theta$-intermediate dimension} of $E$  is defined by 
 	\[
 	\begin{aligned}
 		\underline{\dim}_\theta E =\inf \Big\{ s\geq 0: & \text{ for all } \epsilon > 0 \text{ and all } r_0 > 0, \text{ there exists } 0 < r \leq r_0  \text{ and }\\
 		& \text{a cover } \{U_i\} \text{ of } E \text{ such that } r  \leq |U_i| \leq r^{\theta} \text{ and } \sum |U_i|^s \leq \epsilon\Big\}.
 	\end{aligned}
 	\]
 	Similarly,  the {\em upper $\theta$-intermediate dimension} of $E$  is defined by
 	\[
 	\begin{aligned}
 		\overline{\dim}_\theta E =\inf& \Big\{ s\geq 0: \text{ for all } \epsilon > 0 \text{ there exists } r_0 > 0  \text{ such that for all  } 0 < r \leq r_0, \\
 		& \text{there is a cover } \{U_i\} \text{ of } E\text{ such that } r  \leq |U_i| \leq r^{\theta} \text{ and } \sum |U_i|^s \leq \epsilon\Big\}.
 	\end{aligned}
 	\]
 \end{defi}
  With these definitions,                                                            
  \begin{equation*}
  	\underline{\dim}_1 E = \underline{\dim}_B E   \quad
  	\text{ and }\quad \overline{\dim}_1 E= \overline{\dim}_B E.
  \end{equation*}
  It was proved in \cite{FFK} that upper and lower intermediate dimensions are non-decreasing and continuous on $(0,1]$. 
At the endpoint $\theta=0$,   $\underline{\dim}_0 E=\overline{\dim}_0 E=\dim_H E$. Continuity at $\theta=0$ need not hold.

We also recall the Assouad-type dimensions that will be used in this paper.
For $0<\theta<1$, the {\em upper Assouad spectrum} of $E$ is defined by
  	\begin{align}\label{defi-upperassouad}
  		\overline{\dim}_A^\theta E = \inf \Bigg\{ t \geq 0 : {} & \text{there exists a constant } c>0 \text{ such that for all }\notag \\
  		 & 0<r\leq R^{\frac{1}{\theta}}<R<1 \text{ and } x\in E,  
  	  N_r(B(x,R)\cap E) \leq c\left(\frac{R}{r}\right)^t \Bigg\}{.}
  	\end{align}

  	 The upper Assouad spectrum is non-decreasing in $\theta$.
  	The {\em quasi-Assouad dimension} of $E$ is  defined as the limit of the upper Assouad spectrum as $\theta$ approaches $1$, i.e.,
  	\begin{equation*}
  		\dim_{qA} E = \lim_{\theta\nearrow 1} \overline{\dim}_A^\theta E.
  	\end{equation*}
  	The {\em Assouad dimension} $\dim_A E$ is obtained from \eqref{defi-upperassouad} by replacing the scale restriction $0<r\leq R^{1/\theta}<R<1$ with $0<r<R<1$.
  	
  	 Generally, for all $\theta \in (0, 1)$,
 \[
   \underline{\dim}_{\mathrm{B}}E\leq  \overline{\dim}_{\mathrm{B}} E\leq \overline{\dim}_{\mathrm{A}}^\theta E \leq \dim_{\mathrm{qA}} E \leq \dim_{\mathrm{A}} E{.}
 \]

For each $0<\theta\leq1$, the upper and lower $\theta$-intermediate dimensions are unchanged when a bounded set is replaced by its closure. The same holds for the Assouad dimension, the upper Assouad spectrum, the quasi-Assouad dimension, and the upper and lower box dimension profiles. In proofs concerning these quantities, we may therefore replace $E$ by $\overline E$ and work with a compact set without loss of generality.

  \section{Intermediate dimensions}\label{sec-intermediate}
Frostman-type characterizations of fractal dimensions were studied in \cite{FZ}. In this section, we first recall the mass distribution principle and Frostman's lemma for intermediate dimensions.  
  \begin{lem}[\cite{FFK} Proposition 2.2] \label{Mass}
  	Let $E$ be a non-empty and bounded  subset of $\R^d$ and let $\theta\in(0,1]$ and $s\geq0$.  Suppose that there are $c, r_0>0$ such that for all $0<r\leq r_0$ we can find a Borel measure $\mu_r\in \mP(E)$ with
  	\begin{equation*}
  		\mu_r(B(x,h))\leq c\cdot h^s \quad \text{for all   $x\in\R^d$ and  $r\leq h\leq r^\theta$}.
  	\end{equation*}
  	Then $\lid E\geq s$. Moreover, if measures $\mu_r$ with the above properties can be found only for a sequence of scales $r\to0$, then the conclusion is weakened to $\uid E\geq s$.
  \end{lem}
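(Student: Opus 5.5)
The plan is the standard mass distribution argument. Fix a cover $\{U_i\}$ of $E$ with $r\le |U_i|\le r^\theta$, and compare $\sum|U_i|^s$ with the total mass of a suitable $\mu_r$.

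First, the lower dimension. Let $0<r\le r_0$ and let $\{U_i\}$ be any cover of $E$ with $r\le |U_i|\le r^\theta$. Write $h_i=|U_i|$. If $U_i$ meets $E$, pick $x_i\in U_i$; then $U_i\subset B(x_i,h_i)$, and $h_i$ lies in the admissible range $[r,r^\theta]$. The hypothesis gives $\mu_r(U_i)\le\mu_r(B(x_i,h_i))\le c\,h_i^s$. Since $\mu_r$ is a probability measure supported on $E$, countable subadditivity yields
\begin{equation*}
1=\mu_r(E)\le\sum_i\mu_r(U_i)\le c\sum_i|U_i|^s .
\end{equation*}
So every admissible cover at every scale $r\le r_0$ satisfies $\sum|U_i|^s\ge 1/c$. (If $\mu_r$ only lives on $\overline E$, work with $\overline E$; the paper notes the dimensions are unchanged.)

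Now take any $s'<s$. Since $|U_i|\le r^\theta\le 1$ for small $r$, we have $\sum|U_i|^{s'}\ge\sum|U_i|^s\ge 1/c$. Hence, for $\epsilon<1/c$ and $r_0'<r_0$, no scale $r\le r_0'$ admits a cover with sum at most $\epsilon$. Thus $s'$ fails the condition in the definition of $\lid E$, and the same holds for every smaller exponent, since the sums only increase. Therefore $\lid E\ge s'$ for all $s'<s$, and so $\lid E\ge s$. Note that it is exactly the exponent $s$ itself, not a smaller one, for which the bound holds.

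For the second statement, suppose the measures exist only along a sequence $r_n\to0$. Then for each $r_n$ every admissible cover has $\sum|U_i|^{s'}\ge 1/c$. The definition of $\uid E$ requires, for $\epsilon<1/c$, a threshold below which all $r$ admit good covers. This is impossible, because the $r_n$ are arbitrarily small, so $\uid E\ge s$.

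There is no serious obstacle. The only points needing care are two. One is that $h_i$ lies in the range $[r,r^\theta]$ where the hypothesis applies; this holds because the cover condition and the ball condition use the same scale window. The other is the comparison between exponents $s'$ and $s$, which needs $|U_i|\le1$ and so $r$ small.
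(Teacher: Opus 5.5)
Your argument is correct and is essentially the standard mass distribution proof of \cite[Proposition~2.2]{FFK}, which the paper cites without reproducing. In that proof each $U_i$ lies in the closed ball $B(x_i,|U_i|)$ with $|U_i|\in[r,r^\theta]$, so $1\le\sum_i\mu_r(B(x_i,|U_i|))\le c\sum_i|U_i|^s$ for every admissible cover, at all $r\le\min\{r_0,1\}$ or along the given sequence. The one cosmetic fix is to apply $\mu_r$ to the balls rather than to the possibly non-Borel sets $U_i$. Also, when passing to $\overline E$, note that for $s>0$ an admissible cover with finite $s$-sum is finite (each term is at least $r^s$), so the closed balls cover $\overline E$.
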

  \begin{lem}[\cite{FFK} Proposition 2.3; \cite{Ba} Lemma 5.2]\label{Frostman}
  	Let $E$ be a non-empty and bounded   subset of $\R^d$ and let $\theta\in(0,1]$.
  	\begin{itemize}
  		\item [$(1)$] If $0<s<\uid E$, then there exists a constant $c>0$ such that  for all $r_0>0$, there exists $0<r\leq r_0$ and a  measure $\mu_r\in \mP(E)$ such that if $x\in\R^d $ and  $r\leq h\leq r^\theta$ then
  		\begin{equation*}
  			\mu_r(B(x,h))\leq c\cdot h^s.
  		\end{equation*}
  		\item [$(2)$] If $0<s<\lid E$, then there exists a constant $c>0$ such that  for all  sufficiently small $ r>0$ there exists a  measure  $\mu_r\in \mP(E)$ such that if $x\in \R^d$ and  $r\leq h\leq r^\theta$ then
  		\begin{equation*}
  			\mu_r(B(x,h))\leq c\cdot h^s.
  		\end{equation*}
  	\end{itemize}
  \end{lem}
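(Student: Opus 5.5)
The statement to prove is Lemma~\ref{Frostman}, the Frostman lemma for intermediate dimensions. I would argue by duality between weighted covers and measures, carried out at each fixed scale $r$. Fix $\theta$ and $s$. The natural finite-scale quantity is
\[
S_r^s(E)=\inf\Big\{\sum|U_i|^s:\{U_i\}\text{ covers }E,\ r\le|U_i|\le r^\theta\Big\}.
\]
The definitions give the following. If $s<\uid E$, then $S_r^s(E)\ge\epsilon_0$ for some $\epsilon_0>0$ and for a sequence $r\to0$ (in fact for $r$ arbitrarily small). If $s<\lid E$, then $S_r^s(E)\ge\epsilon_0$ for all sufficiently small $r$. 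To see this, take $s<s'<\uid E$. By definition there are $\epsilon$ and arbitrarily small $r$ for which every admissible cover has $\sum|U_i|^{s'}>\epsilon$. Since $|U_i|\le1$ for small $r$, we get $\sum|U_i|^s\ge\sum|U_i|^{s'}>\epsilon$. The lower case is analogous.

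So it suffices to show the following: if $S_r^s(E)\ge\epsilon_0$, then there is $\mu_r\in\mP(E)$ with $\mu_r(B(x,h))\le c\,h^s$ for $r\le h\le r^\theta$, where $c$ depends only on $\epsilon_0$, $d$ and $s$.

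I would build $\mu_r$ by the classical dyadic Frostman construction, truncated to the scale window $[r,r^\theta]$. Replace $E$ by its closure, so $E$ is compact. Let $k_0$ and $k_1$ be the integers with $2^{-k_1}\approx r$ and $2^{-k_0}\approx r^\theta$.
\begin{enumerate}
\item Start at level $k_1$. Put mass $2^{-k_1 s}$ uniformly (as a normalized Lebesgue measure) on each dyadic cube of side $2^{-k_1}$ that meets $E$. Choose one point of $E$ in each such cube and move the mass onto it, which keeps the support in $E$.
\item Go upward level by level, up to $k_0$. Whenever a dyadic cube $Q$ at level $k$ has mass exceeding $|Q|^s$, scale the mass inside $Q$ down so that it equals $|Q|^s$.
\end{enumerate}
The resulting measure $\nu$ satisfies $\nu(Q)\le|Q|^s$ for every dyadic cube with side in $[2^{-k_1},2^{-k_0}]$.

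Each point of $E$ lies in a maximal saturated cube, that is, a cube with $\nu(Q)=|Q|^s$, of side in that range. At the bottom level this holds because bottom cubes are saturated unless a larger cube was reduced. These maximal saturated cubes are disjoint and cover $E$. Their diameters lie in $[\sqrt d\,2^{-k_1},\sqrt d\,2^{-k_0}]$, so after adjusting constants the cover is admissible. Hence
\[
\|\nu\|=\sum\nu(Q)=\sum|Q|^s\gtrsim S_r^s(E)\ge\epsilon_0 .
\]

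Set $\mu_r=\nu/\|\nu\|$. A ball $B(x,h)$ with $r\le h\le r^\theta$ meets at most $C_d$ dyadic cubes of side comparable to $h$, and each has $\nu$-mass at most $\lesssim h^s$. Therefore
\[
\mu_r(B(x,h))\le C_d\,h^s/\epsilon_0 ,
\]
with the constant independent of $r$. This gives (1), using the sequence of scales $r\to0$, and (2), using all small $r$.

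The main obstacle is the boundary bookkeeping. First, diameters must match the window exactly: dyadic cubes of side $2^{-k}$ have diameter $\sqrt d\,2^{-k}$, and the window endpoints $r^\theta$ and $r$ do not align with dyadic levels. The fix is to handle the window in the cover comparison with slightly shifted $r$, or to compare $S_r^s$ to a $\theta$-cover with constants, which is harmless since $c$ may depend on $d$. Second, at the top level $k_0$, cubes of side about $r^\theta$ are only capped; there is no requirement above that scale, and none is needed for $h\le r^\theta$. Once these are handled, the argument is routine.
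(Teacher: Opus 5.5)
Your proposal is correct and follows essentially the route of the cited proof: the paper does not reprove this lemma but quotes \cite{FFK} (Proposition~2.3) and \cite{Ba} (Lemma~5.2), where it is obtained by this same dyadic Frostman construction truncated to the scale window $[r,r^\theta]$. As in your argument, the uniform constant $c$ comes from a fixed lower bound $\epsilon_0$ on the restricted-scale $s$-sums, which holds along a sequence $r\to0$ in case (1) and for all small $r$ in case (2).

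Two minor points. First, read $|Q|$ in your capping rule as the side length, so that untouched bottom cubes really are saturated; the passage to diameters then costs only the factor $d^{s/2}$ hidden in your $\gtrsim$. Second, do not pass to $\overline{E}$: compactness is never used, and placing the atoms at points of $E$, in cubes meeting $E$ itself, is what gives $\mu_r\in\mP(E)$ rather than only $\mu_r\in\mP(\overline{E})$.
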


Combining Lemmas \ref{Mass} and \ref{Frostman}, we obtain, for $\theta\in(0,1]$,
  \begin{equation}\label{lid}
  	\lid E=\varliminf_{r\to0}\sup_{\mu\in\mP(E)}\inf_{r\leq h\leq r^\theta }\inf_x\frac{\log\mu(B(x,h))}{\log h}
  \end{equation}
  and 
  \begin{equation}\label{uid}
  	\uid E=\varlimsup_{r\to0}\sup_{\mu\in\mP(E)}\inf_{r\leq h\leq r^\theta }\inf_x\frac{\log\mu(B(x,h))}{\log h}.
  \end{equation}

Let $s\in\R$ and $0<r<1$, and let $\mu$ be a Borel probability measure supported on $E$. We define
\begin{equation*}
	\alpha(s,\mu,r)=\begin{cases}
		\sup\Big\{\alpha\in[0,1]: \sup_x\mu(B(x, r^\alpha))\geq r^{\alpha s}\Big\}, & s>0; \\
		0, & s\leq0.
	\end{cases}
\end{equation*}
 Furthermore, we define
\begin{equation}\label{alpha}
	\alpha_E(s)=\lim_{\delta\to 0^+}\varliminf_{r\to0}\inf_{\mu\in\mP(E)}\alpha(s+\delta,\mu,r)
\end{equation}
and
\begin{equation}\label{beta}
	\beta_E(s)=\lim_{\delta\to 0^+}\varlimsup_{r\to0}\inf_{\mu\in\mP(E)}\alpha(s+\delta,\mu,r).
\end{equation}
For convenience, we abbreviate $\alpha_E(s)$ and $\beta_E(s)$ as $\alpha(s)$ and $\beta(s)$, respectively. It is straightforward to check that both $\alpha(s)$ and $\beta(s)$ are non-decreasing in $s$.

The following proposition identifies $\alpha(s)$ and $\beta(s)$, defined in \eqref{alpha} and \eqref{beta}, as generalized inverses of the upper and lower intermediate dimension functions, respectively.

  \begin{pro}\label{alphaid}
Let $\alpha(s)$ and $\beta(s)$ be defined in \eqref{alpha} and \eqref{beta}, respectively. Then, for all $s>0$,
  	\begin{equation}\label{alphaid1}
  		\alpha(s)=\sup\Big\{\theta\in (0,1]: \uid(E)\leq s\Big\} 
  	\end{equation}
  	and 
  	\begin{equation}\label{alphaid2}
  		\beta(s)=\sup\Big\{\theta\in (0,1]: \lid(E)\leq s\Big\}.
  	\end{equation}
  \end{pro}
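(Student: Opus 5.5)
The plan is to prove \eqref{alphaid1} by two inequalities, each coming from one half of the Frostman/mass distribution pair (Lemmas~\ref{Frostman} and \ref{Mass}). Then \eqref{alphaid2} follows by the same argument, with $\liminf$ and $\limsup$ interchanged and the ``sequence of scales'' versions replaced by ``all small scales''. Write $\theta^*=\sup\{\theta\in(0,1]:\uid E\le s\}$, with $\sup\emptyset=0$. Since $\theta\mapsto\uid E$ is non-decreasing, $\{\theta:\uid E\le s\}$ is an initial segment of $(0,1]$, so $\theta>\theta^*$ implies $\uid E>s$. The key dictionary, read off directly from the definition of $\alpha(s,\mu,r)$, is the following. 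If $\alpha(t,\mu,r)<\theta'$, then
\[
\sup_x\mu(B(x,h))<h^{t}\quad\text{for all } h\in[r,r^{\theta'}) .
\]
Conversely, if $\mu(B(x,h))<h^t$ for all $x$ and all $h\in[r,r^{\theta}]$, then $\alpha(t,\mu,r)\le\theta$.

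\emph{Step 1: $\alpha(s)\le\theta^*$.} Fix $\theta\in(\theta^*,1]$, so $\uid E>s$. For small $\delta>0$, choose $t$ with $s+\delta<t<\uid E$. Lemma~\ref{Frostman}(1) gives $c>0$, a sequence $r\to0$ and measures $\mu_r\in\mP(E)$ with $\mu_r(B(x,h))\le c h^t$ for all $x$ and all $h\in[r,r^\theta]$. Since $h\le r^\theta\to0$, for $r$ small we have $ch^t<h^{s+\delta}$ throughout this range. By the dictionary, $\alpha(s+\delta,\mu_r,r)\le\theta$ along the sequence. Hence $\liminf_{r\to0}\inf_\mu\alpha(s+\delta,\mu,r)\le\theta$ for every small $\delta$, so $\alpha(s)\le\theta$. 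Letting $\theta\downarrow\theta^*$ gives the claim. If $\theta^*=1$ there is nothing to prove, since $\alpha\le1$.

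\emph{Step 2: $\alpha(s)\ge\theta^*$.} Suppose instead that $\alpha(s)<\theta'<\theta<\theta^*$, where $\theta$ is chosen so that $\uid E\le s$. The limit in $\delta$ is monotone, since $\alpha(\cdot,\mu,r)$ is non-increasing in its first argument. So there exist $\delta>0$, a sequence $r\to0$ and $\mu_r\in\mP(E)$ with $\alpha(s+\delta,\mu_r,r)<\theta'$. By the dictionary, $\mu_r(B(x,h))<h^{s+\delta}$ for all $x$ and all $h\in[r,r^{\theta'}]$; the closed endpoint is harmless after shrinking $\theta'$ slightly. The sequence version of Lemma~\ref{Mass} then yields $\overline{\dim}_{\theta'}E\ge s+\delta>s$. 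This contradicts $\overline{\dim}_{\theta'}E\le\uid E\le s$. If $\theta^*=0$, the inequality is trivial.

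\emph{Step 3: the lower dimensions.} For $\beta$, replace Lemma~\ref{Frostman}(1) by (2), which holds for all small $r$ and so bounds the $\limsup$. Likewise, replace the sequence version of Lemma~\ref{Mass} by the all-small-$r$ version: if $\beta(s)<\theta'$, then for all small $r$ there is a good measure, and hence $\underline{\dim}_{\theta'}E>s$.

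The main obstacle is bookkeeping rather than ideas. One must handle the strict versus non-strict inequalities at the endpoints of the $\alpha$-range ($h=r$ versus $h=r^\theta$), check that the constant $c$ from Frostman's lemma is absorbed by the $\delta$-margin (this needs $t>s+\delta$ and $h\to0$ uniformly), and treat the degenerate cases $\theta^*\in\{0,1\}$ and the convention $\sup\emptyset=0$. The monotonicity of intermediate dimensions in $\theta$, quoted from \cite{FFK}, is what lets the pointwise statements become statements about $\theta^*$.
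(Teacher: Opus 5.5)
Your proof is correct and follows essentially the same route as the paper: Lemma~\ref{Frostman}, with a $\delta$-margin absorbing the constant $c$, gives $\alpha(s)\le\theta^*$, and Lemma~\ref{Mass} combined with the monotonicity of $\theta\mapsto\uid E$ gives the reverse inequality; the lower case is obtained by interchanging the scale quantifiers. One small slip: $\alpha(\cdot,\mu,r)$ is non-decreasing (not non-increasing) in its first argument, so the limit in $\delta$ is an infimum over $\delta>0$, but this affects neither step.
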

\begin{proof}
We prove \eqref{alphaid1}; the proof of \eqref{alphaid2} is similar, using all sufficiently small scales in place of a sequence of scales tending to zero, and the $\varlimsup$ in \eqref{beta} in place of the $\varliminf$ in \eqref{alpha}.
	We first prove
	\begin{equation}\label{sec3eq1}
		\alpha(s)\leq\sup\Big\{\theta\in(0,1]: \uid E\leq s\Big\}.
	\end{equation}

If the right-hand side of \eqref{sec3eq1} equals $1$, the inequality is trivial. Otherwise, fix $\theta\in(0,1]$ with $\uid E>s$, and choose $\delta_0>0$ such that $\uid E>s+2\delta_0$. By Lemma \ref{Frostman}, there exists $c>0$ such that, for arbitrarily small $r\in(0,1)$, there is a measure $\mu_r\in\mP(E)$ satisfying
\[
\sup_x\mu_r(B(x,h))\leq c h^{s+2\delta_0}
\quad\text{for all }r\leq h\leq r^\theta.
\]
Let $r$ be  small enough such that $c r^{\theta\delta_0}<1$, and hence
\[
\sup_x\mu_r(B(x,h))
\leq c h^{s+2\delta_0}
\leq c r^{\theta\delta_0}h^{s+\delta_0}
<h^{s+\delta_0}
\quad (r\leq h\leq r^\theta).
\]
It follows that $\alpha(s+\delta_0,\mu_r,r)\leq\theta$, and thus $\alpha(s)\leq\theta$. Since this holds for every $\theta\in(0,1]$ with $\uid E>s$, we conclude  \eqref{sec3eq1}.

If $\alpha(s)=1$, equality follows from \eqref{sec3eq1}. Otherwise, fix $\theta\in(\alpha(s),1]$. By \eqref{alpha}, there exists $\delta_0>0$ such that, for arbitrarily small $r\in(0,1)$, there is a measure $\mu_r\in\mP(E)$ with $\alpha(s+\delta_0,\mu_r,r)<\theta$. Since $s+\delta_0>0$, the definition of $\alpha(s+\delta_0,\mu_r,r)$ gives
	\[
	\sup_x \mu_r(B(x,h))< h^{s+\delta_0} \quad \text{for all } r\leq h\leq r^\theta.
	\]
By Lemma \ref{Mass}, we have $\uid E\geq s+\delta_0>s$. By the monotonicity of the upper intermediate dimensions, it follows that
	\[
	\theta\geq\sup\Big\{\theta\in(0,1]: \uid E\leq s\Big\}.
	\]
Letting $\theta\downarrow\alpha(s)$ yields the reverse inequality of \eqref{sec3eq1} and completes the proof.
\end{proof}
  
  \section{Box dimension profiles}\label{sec-profile}
 In this section, we establish equivalent characterizations of upper and lower box dimension profiles in Theorem \ref{main}. We first introduce several auxiliary lemmas.

  \begin{lem}\label{lem1}
Let $E\subset\R^d$ be non-empty and bounded, and let $m>0$. Then
  	\begin{equation}\label{lem11}
  		\lbd^m E=m+\varliminf_{r\rightarrow0}\sup_{\mu\in\mP(E)}\inf_x\frac{\log\int_r^\infty\mu(B(x,h))h^{-m-1}\dih}{\log r},
  	\end{equation}
  	and 
  	\begin{equation}\label{lem12}
  		\ubd^m E=m+\varlimsup_{r\rightarrow0}\sup_{\mu\in\mP(E)}\inf_x\frac{\log\int_r^\infty\mu(B(x,h))h^{-m-1}\dih}{\log r}.
  	\end{equation}
  \end{lem}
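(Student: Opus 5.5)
The plan is to rewrite the potential $\prm\ast\mu(x)$ as a layer-cake integral of ball masses, and then pass from energies to capacities through a minimax/Frostman-type step.

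\emph{Step 1: layer-cake identity.} For $\mu\in\mP(E)$ and $x\in\R^d$, Fubini gives
\[
\prm\ast\mu(x)=\int_0^1\mu\{y:\prm(x-y)\ge t\}\dif t .
\]
Substituting $t=(r/h)^m$ for $h\ge r$ turns the superlevel set into $B(x,h)$, and we obtain
\[
\prm\ast\mu(x)=m r^m\int_r^\infty \mu(B(x,h))h^{-m-1}\dih .
\]
The constant and the factor $r^m$ are harmless after taking $\log(\cdot)/\log r$. Precisely, $\log(mr^m)/\log r\to m$, which accounts for the additive $m$ in \eqref{lem11} and \eqref{lem12}.

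\emph{Step 2: capacity versus the quantity in the lemma.} The task is to show
\[
\frac{1}{\ca(E)}\approx \inf_{\mu\in\mP(E)}\sup_x \prm\ast\mu(x)
\]
up to constants independent of $r$. Writing $F_\mu(x)=\int_r^\infty\mu(B(x,h))h^{-m-1}\dih$, this says that $\ca(E)^{-1}$ is comparable to $mr^m\inf_\mu\sup_x F_\mu(x)$. Since $\log r<0$, we then have
\[
\sup_\mu\inf_x\frac{\log F_\mu(x)}{\log r}=\frac{\log\inf_\mu\sup_x F_\mu(x)}{\log r}.
\]
Combined with Step 1, this gives
\[
\frac{\log \ca(E)}{-\log r}=m+\frac{\log \inf_\mu\sup_x F_\mu}{\log r}+o(1),
\]
and taking $\varliminf$ and $\varlimsup$ yields both formulas.

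The comparison in Step 2 has two directions.

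The first direction is immediate: $\ca(E)^{-1}\le\int\prm\ast\mu\dif\mu\le\sup_x\prm\ast\mu(x)$ for every $\mu$.

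For the reverse direction, take the equilibrium measure $\nu$ attaining the minimum energy $I$; it exists by compactness and continuity, after replacing $E$ by $\overline E$. The standard variational argument (perturb $\nu$ to $(1-\epsilon)\nu+\epsilon\delta_{x}$ for $x\in E$) shows that $\prm\ast\nu\le I$ everywhere on $E$, using continuity of $\prm\ast\nu$.

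The supremum over all $x\in\R^d$ must then be handled. For $x$ at distance $h_0$ from $E$, pick $z\in E$ with $|x-z|=h_0$. If $h\le h_0$ then $B(x,h)\cap E=\emptyset$, while if $h>h_0$ then $B(x,h)\subset B(z,2h)$. Hence
\[
F_\nu(x)\le\int_r^\infty\nu(B(z,2h))h^{-m-1}\dih = 2^m F_\nu(z)\quad\text{(up to the lower limit } 2r\text{)},
\]
and replacing the lower limit $2r$ by $r$ only increases the integral. Thus $\sup_{\R^d}F_\nu\le 2^m\sup_E F_\nu$, which gives $\inf_\mu\sup_x\prm\ast\mu\le 2^m/\ca(E)$.

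\emph{Main obstacle.} The delicate step is the variational inequality $\prm\ast\nu\le I$ on all of $\operatorname{supp}$-relevant points, i.e.\ on all of $E$ rather than merely $\nu$-a.e. If it proves awkward, an alternative is the minimax theorem applied to the continuous bilinear form $(\mu,\lambda)\mapsto\iint\prm(x-y)\dif\mu\dif\lambda$ on the compact convex set $\mP(\overline E)$. This gives $\inf_\mu\sup_{x\in E}\prm\ast\mu(x)=\inf_\mu\sup_\lambda\iint\prm\dif\mu\dif\lambda$. By symmetry of the kernel and the fact that $\prm$ is positive definite up to constants, the latter is comparable to the minimal energy.

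Everything else consists of routine constant bookkeeping.
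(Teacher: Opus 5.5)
Your overall architecture is the right one: the layer-cake identity, the trivial bound $\ca(E)^{-1}\le\sup_x\prm\ast\mu(x)$, and then a measure whose potential is controlled on a compact set and extended to $\R^d$ by the doubling $B(x,h)\subset B(z,2h)$. However, the key inequality in the reverse direction is wrong. The perturbation you name gives, by minimality of $I$,
\[
0\le\frac{d}{d\epsilon}\Big|_{\epsilon=0^+}\int\prm\ast\nu_\epsilon\dif\nu_\epsilon=2\bigl(\prm\ast\nu(x)-I\bigr),\qquad \nu_\epsilon=(1-\epsilon)\nu+\epsilon\delta_x ,
\]
that is, $\prm\ast\nu\ge I$ on $\overline E$, which is the opposite direction. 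Moreover, $\prm\ast\nu\le I$ on all of $E$ is false in general. Take $E=\{0,t,1\}\subset\R$ with $0<t\le r<1-t$. Then $\prm(t)=1$ and $\prm(1-t)>\prm(1)$, so shifting mass from $t$ to $0$ strictly lowers the energy. The equilibrium measure is therefore $\nu=\frac12(\delta_0+\delta_1)$, with $I=\frac12(1+\prm(1))$, while
\[
\prm\ast\nu(t)=\tfrac12\bigl(1+\prm(1-t)\bigr)>I .
\]
Since your nearest-point step is run relative to $E$, it has no valid input.

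The repair is short. Combining $\prm\ast\nu\ge I$ on $\overline E$ with $\int\prm\ast\nu\dif\nu=I$ gives equality $\nu$-a.e. Continuity of $\prm\ast\nu$ then gives $\prm\ast\nu=I$ on the compact set $\operatorname{supp}\nu$. Running your doubling argument with nearest points in $\operatorname{supp}\nu$ rather than in $E$ yields $\sup_{\R^d}\prm\ast\nu\le 2^mI$.

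The paper reaches the same conclusion without equilibrium measures or first-variation conditions. It takes any $\mu$ with energy $<r^s$ and uses Markov's inequality to get the compact set $E_r=\{x\in E:\prm\ast\mu(x)\le 2r^s\}$ with $\mu(E_r)>\frac12$. It then renormalizes $\mu|_{E_r}$, so the potential is $\le 4r^s$ on $E_r$, and applies the doubling argument relative to $E_r$.

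Your minimax fallback does not close the gap. You have not shown that $\prm$ is positive definite (it is a mixture of ball indicators), and in any case bounding $\inf_\mu\sup_{x\in E}\prm\ast\mu(x)$ by the minimal energy is precisely the point at issue.

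Finally, for non-closed $E$ the right-hand side involves $\mP(E)$ rather than $\mP(\overline E)$, so the measure built on $\overline E$ must be transferred back to $E$. The paper does this by approximating with finitely supported measures on $E$, using that $\prm$ is $(m/r)$-Lipschitz so that potentials converge uniformly.
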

  \begin{proof}
  We prove \eqref{lem12}; \eqref{lem11} follows by using all sufficiently small scales in place of a sequence of scales tending to zero.
  We first assume that $E$ is compact.
  	By Fubini's theorem and a change of variables, it is straightforward to derive
  	\begin{equation}\label{phimu-original}
  		\phi_r^m\ast\mu(x)=mr^m\int_r^\infty\mu(B(x,h))h^{-m-1}\dih.
  	\end{equation}
  	It is clear that 
  	\begin{equation*}
  		\ubd^m E
  		\geq \varlimsup_{r\to0}\sup_{\mu\in\mP(E)}\inf_{x\in\R^d}\frac{\log\phi_r^m\ast\mu(x) }{\log r}.
  	\end{equation*}
  	We now prove the reverse inequality.
  	Fix $s<\ubd^m E$. Then for any $r_0>0$, there exist $0<r<r_0$ and a measure $\mu_r\in\mP(E)$ such that
  	\begin{equation*}
  		\int\phi_r^m\ast\mu_r(x)\dif\mu_r(x)<r^s.
  	\end{equation*}
  	It follows that
  	\begin{equation*}
  		r^s> \int\phi_r^m\ast\mu_r(x)\dif\mu_r(x)> 2r^s\cdot \mu_r\{x\in E: \phi_r^m\ast\mu_r(x)>2r^s\}.
  	\end{equation*}
  	This implies
  	\begin{equation}\label{proeq-original}
  		\mu_r\{x\in E: \phi_r^m\ast\mu_r(x)\leq2r^s\}>\frac{1}{2}.
  	\end{equation}
  	Define
  	\begin{equation}\label{proeq3-original}
  		E_r=\{x\in E: \phi_r^m\ast\mu_r(x)\leq2r^s\}\quad\text{ and } \quad
  		\widetilde{\mu}_r=\frac{\mu_r|_{E_r}}{\mu_r(E_r)}.
  	\end{equation}
  	Since $\phi_r^m(x)$ is bounded and continuous, by the dominated convergence theorem, the function $x\mapsto\phi_r^m*\mu_r(x)$ is continuous. Then the set
  	\[
  	E_r=E\cap\{x\in\mathbb R^d:\phi_r^m*\mu_r(x)\le 2r^s\}
  	\]
  	is compact.

  	If $y\in E_r$, it follows from \eqref{proeq-original} and \eqref{proeq3-original} that
  	\begin{align}\label{proeq2-original}
  		\phi_r^m\ast\widetilde{\mu}_r(y)\leq (\mu_r(E_r))^{-1} \cdot  \phi_r^m\ast\mu_r(y)
  		\leq 4 r^s.
  	\end{align}
  	Now assume $x\notin E_r$. Since $E_r$ is compact, there exists $y_x\in E_r$ such that 
  	\begin{equation*}
  		|x-y_x|=\inf\{|x-y|:y\in E_r\}.
  	\end{equation*}
  	Thus $\widetilde{\mu}_r(B(x,h))=0$ for all $0<h<|x-y_x|$.
  	Set $m_{x,r}=\max\{r, |x-y_x|\}$. Then for all $h\geq m_{x,r}$, we have
  	\begin{equation}\label{proeq4}
  		B(x,h)\subset B(y_x,2h).
  	\end{equation}
  	It follows that
  	\begin{align}\label{proeq5-original}
  		\phi_r^m\ast\widetilde{\mu}_r(x)=& mr^m\int_r^\infty\widetilde{\mu}_r(B(x,h))h^{-m-1}\dih \quad\text{(by \eqref{phimu-original})}\notag\\
  		=&mr^m\int_{m_{x,r}}^\infty\widetilde{\mu}_r(B(x,h))h^{-m-1}\dih\notag\\
  		\leq&mr^m\int_{m_{x,r}}^\infty\widetilde{\mu}_r(B(y_x,2h))h^{-m-1}\dih \quad\text{(by \eqref{proeq4})}\notag\\
  		\leq& 2^m \cdot\phi_r^m\ast\widetilde{\mu}_r(y_x)\notag\\
  		\leq& 2^{m+2}  r^s \quad\text{(by \eqref{proeq2-original}).}
  	\end{align}
  	We conclude from \eqref{proeq2-original} and \eqref{proeq5-original} that for all $x\in\R^d$,
  	\begin{equation*}
  		\phi_r^m\ast\widetilde{\mu}_r(x)\leq 2^{m+2}  r^s.
  	\end{equation*}
  	Therefore, taking the infimum over $x\in\R^d$ and the supremum over measures $\mu\in\mathcal{P}(E)$ yields
  	\begin{equation*}
  		\varlimsup_{r\to0}\sup_{\mu\in\mP(E)}\inf_{x\in\R^d}\frac{\log\phi_r^m\ast\mu(x) }{\log r}\geq s.
  	\end{equation*}
  	Letting $s\to\ubd^m E$, we obtain
  	\begin{equation*}
  		\ubd^m E\leq\varlimsup_{r\to0}\sup_{\mu\in\mP(E)}\inf_{x\in\R^d}\frac{\log\phi_r^m\ast\mu(x) }{\log r}.
  	\end{equation*}
  	Thus we have proved \eqref{lem12} when $E$ is compact.
  	
  	It is clear that $\overline E$ is compact for all non-empty bounded set $E$, and $E$ is dense in $\overline E$.
  	Fix $0<r<1$ and $\mu\in\mathcal{P}(\overline{E})$.
  	For any $0<\delta<1$, there exists  a finite Borel partition $\overline E=\bigcup_{i=1}^N A_i$ and points
  	$y_i\in E$ such that $|y-y_i|<\delta$ for every $y\in A_i$.
  	 Define
  	\[
  	\mu_\delta=\sum_{i=1}^N\mu(A_i)\delta_{y_i}\in\mP(E).
  	\]
  	Since $\phi_r^m$ is Lipschitz with constant $\frac{m}{r}$, we have
  	\[
  	\sup_{x\in\R^d}
  	\left|(\phi_r^m*\mu_\delta)(x)-(\phi_r^m*\mu)(x)\right|
  	\leq \frac{m\delta}{r}\longrightarrow0
  	\qquad\text{ as }\delta\to0.
  	\] 
  Thus
  	\[
  	 \inf_{x}
  	\frac{\log(\phi_r^m*\mu_\delta(x))}{\log r}\to \inf_{x}
  	\frac{\log(\phi_r^m*\mu(x))}{\log r}\quad\text{as }\delta\to0.
  	\]
  	Together with $\mP(E)\subset\mP(\overline E)$, this yields
  	\[
  	\sup_{\nu\in\mP(E)} \inf_{x}
  	\frac{\log(\phi_r^m*\nu(x))}{\log r}
  	=
  	\sup_{\nu\in\mP(\overline E)} \inf_{x}
  	\frac{\log(\phi_r^m*\nu(x))}{\log r}.
  	\]
   Then  \eqref{lem12} also holds for every non-empty bounded set $E$.
   This completes the proof.
  \end{proof}

Fix $m>0$. Let $0<s<m$, $0<r<1$, and $\mu\in\mP(E)$. We set
$$s_0(s,\mu,r)=s \quad \text{ and } \quad \alpha_0(s,\mu,r)=\alpha(s,\mu,r),$$ 
	and for each $k\geq1$, we inductively define
	\begin{equation}\label{defisk}
		s_k(s,\mu,r)=
		\begin{cases}
			0, & \alpha_{k-1}(s,\mu,r)=0,\\[1mm]
			\displaystyle\max\Big\{0,\ m-\frac{m-s}{\alpha_{k-1}(s,\mu,r)}\Big\},
			& \alpha_{k-1}(s,\mu,r)>0,
		\end{cases}
	\end{equation}
	and
	\begin{equation}\label{defialphak}
		\alpha_k(s,\mu,r)=\alpha(s_k(s,\mu,r),\mu,r). 
	\end{equation}
 
  In the following lemma, we discuss some properties of 	 $\ak$  and $\sk$  defined in \eqref{defisk} and \eqref{defialphak}.

  \begin{lem}\label{alphak}
Let $0<s<m$, $0<r<1$, and $\mu\in\mP(E)$.
  	The following statements hold:
  	\begin{itemize}
  		\item [$(1)$] $\ak$ and $\sk$ are non-increasing in $k$.
  		\item [$(2)$] If $\ak>0$, then 
  		$$\sup_x\mu(B(x,r^{\ak}))\geq r^{\ak\sk}.$$
  	\end{itemize} 
  \end{lem}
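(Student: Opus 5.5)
The plan is to first establish two monotonicity facts about the map $t\mapsto\alpha(t,\mu,r)$ for fixed $\mu$ and $r$, and then run an induction on $k$ for part (1); part (2) will follow from the definition of $\alpha$ as a supremum, together with a closedness argument.

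\emph{Monotonicity in $t$.} For $t>0$ and $\alpha\in[0,1]$ we have $r^{\alpha t}$ non-increasing in $t$, because $0<r<1$. Hence if $\sup_x\mu(B(x,r^\alpha))\ge r^{\alpha t}$, the same inequality holds with any $t'\ge t$ in place of $t$. So the defining set of $\alpha(t,\mu,r)$ grows with $t$, and $\alpha(\cdot,\mu,r)$ is non-decreasing on $(0,\infty)$. Since it equals $0$ for $t\le0$ and takes values in $[0,1]$, it is non-decreasing on all of $\mathbb R$.

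\emph{Monotonicity of the recursion.} The map $a\mapsto \max\{0,m-(m-s)/a\}$ on $(0,1]$ is non-decreasing, since $m-s>0$. Extending it by $0$ at $a=0$ keeps it non-decreasing, because its values are $\ge0$. Call this map $g$, so that $s_k=g(\alpha_{k-1})$. Because $\alpha_0\le1$, we get $s_1=g(\alpha_0)\le g(1)=s=s_0$. By monotonicity of $\alpha(\cdot,\mu,r)$ this gives $\alpha_1\le\alpha_0$. Inductively, if $\alpha_k\le\alpha_{k-1}$, then $s_{k+1}=g(\alpha_k)\le g(\alpha_{k-1})=s_k$, and hence $\alpha_{k+1}\le\alpha_k$. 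This proves (1).

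For (2), suppose $\alpha_k>0$. Then $s_k>0$, since otherwise $\alpha_k=\alpha(s_k,\mu,r)=0$ by definition. The set $A=\{\alpha\in[0,1]:\sup_x\mu(B(x,r^\alpha))\ge r^{\alpha s_k}\}$ has supremum $\alpha_k$, so it remains to show that the supremum is attained. This closedness is the main obstacle. Take $\alpha_j\in A$ with $\alpha_j\uparrow\alpha_k$; we may assume $\alpha_j<\alpha_k$ for all $j$, since otherwise $\alpha_k\in A$ already. Then $r^{\alpha_j}\downarrow r^{\alpha_k}$ and $r^{\alpha_j s_k}\to r^{\alpha_k s_k}$. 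For each $j$, choose $x_j$ with $\mu(B(x_j,r^{\alpha_j}))\ge r^{\alpha_j s_k}-1/j$.

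Since $\mu$ has bounded support, we may assume $x_j$ stays in a bounded set; otherwise the masses vanish, which is impossible for large $j$ because $r^{\alpha_j s_k}$ stays bounded away from $0$. Pass to a subsequence with $x_j\to x$. For every $\eta>0$ and large $j$ we have $B(x_j,r^{\alpha_j})\subset B(x,r^{\alpha_k}+\eta)$. Letting $\eta\to0$, continuity of measure from above on decreasing closed balls gives $\mu(B(x,r^{\alpha_k}))\ge r^{\alpha_k s_k}$. This is exactly where the closedness of the balls $B(x,h)$ is essential.

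Therefore $\alpha_k\in A$, which gives $\sup_x\mu(B(x,r^{\alpha_k}))\ge r^{\alpha_k s_k}$, proving (2).
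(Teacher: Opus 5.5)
Your proof is correct and follows the paper's argument. Part (1) uses the same induction, based on the monotonicity of $\alpha(\cdot,\mu,r)$ and of the recursion map, and part (2) uses the same approximating-exponent and compactness argument with closed balls. Your remark that the points $x_j$ can be taken in a bounded set, because $r^{\alpha_j s_k}-1/j$ is eventually bounded below by a positive constant, justifies a step the paper simply asserts.
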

  \begin{proof}
	  	$(1)$ As $\alpha(s,\mu,r)$ is non-decreasing in $s$, it suffices to prove that $s_k(s,\mu,r)$ is non-increasing in $k$.
	  	Since $\alpha(s,\mu,r)\in[0,1]$, it is clear that $s_0(s,\mu,r)\geq s_1(s,\mu,r)$. 
	  	Assume that $s_{k-1}(s,\mu,r)\geq s_{k}(s,\mu,r)$ for some $k\geq1$. It follows that $\alpha_{k-1}(s,\mu,r)\geq\alpha_{k}(s,\mu,r)$.
  	Then,
  	\begin{equation*}
	  		s_{k}(s,\mu,r)\geq s_{k+1}(s,\mu,r).
	  	\end{equation*}
	  	Thus, $s_k(s,\mu,r)$ and $\alpha_{k}(s,\mu,r)$ are non-increasing in $k$.

$(2)$ Assume that $\alpha_k(s,\mu,r)>0$. Then $s_k(s,\mu,r)>0$.
By the definition of $\alpha_k(s,\mu,r)$, there are exponents $a_j\in[0,1]$ converging to $\alpha_k(s,\mu,r)$ such that
\[
\sup_x\mu(B(x,r^{a_j}))\geq r^{a_js_k(s,\mu,r)}.
\]
Choose $x_j\in\R^d$ satisfying
\[
\mu(B(x_j,r^{a_j}))\geq r^{a_js_k(s,\mu,r)}-\frac1j.
\]
Let \(x_0\in\mathbb R^d\) be an accumulation point of the bounded sequence $\{x_j\}_j$; by passing to a subsequence and relabelling, we may assume without loss of generality that  $x_j\to x_0\in\R^d$.
For each $\delta>0$, we have
\[
B(x_j,r^{a_j})\subset B(x_0,r^{\alpha_k(s,\mu,r)}+\delta)
\]
for all sufficiently large $j$. Taking limits gives
\[
r^{\alpha_k(s,\mu,r)s_k(s,\mu,r)}
\leq\mu(B(x_0,r^{\alpha_k(s,\mu,r)}+\delta)).
\]
Letting $\delta\downarrow0$, we obtain
\[
r^{\alpha_k(s,\mu,r)s_k(s,\mu,r)}
\leq\mu(B(x_0,r^{\alpha_k(s,\mu,r)}))
\leq\sup_x\mu(B(x,r^{\alpha_k(s,\mu,r)})).
\]
  \end{proof}

We now present several equivalent characterizations of the upper box dimension profile using the notation  $s_k(s,\mu,r)$ and  $\alpha_k(s,\mu,r)$ defined in \eqref{defisk} and \eqref{defialphak}.

 \begin{thmm}\label{main}
 	Let $E$ be a non-empty and bounded set. Then
 	\begin{align}
 		\ubd^m E
 		=&\sup\Big\{0<s<m: \lim_{k\to\infty}\varliminf_{r\to0}\inf_{\mu\in\mP(E)}\alpha_k(s,\mu,r)=0\Big\}\label{eq1}\\
 		=&\inf\Big\{0<s<m: \lim_{k\to\infty}\varliminf_{r\to0}\inf_{\mu\in\mP(E)}\alpha_k(s,\mu,r)>0\Big\}\label{eq2}\\
	 	=&\inf\Big\{0<s<m:  \lim_{k\to\infty}\varlimsup_{r\to0}\sup_{\mu\in\mP(E)}\inf_{r\leq h\leq r^{\frac{m-s}{m}}}\inf_x\frac{\log\mu(B(x,h))}{s_k(s,\mu,r)\log h}\leq 1 \Big\}\label{eq3}\\
 		=&\inf\Big\{0<s<m:  \varlimsup_{r\to0}\sup_{\mu\in\mP(E)}\inf_{\frac{m-s}{m}<\alpha\leq1}\inf_x\frac{\log\mu(B(x,r^\alpha))}{(s-m(1-\alpha))\log r}\leq 1 \Big\},\label{eq4}
 	\end{align}
 	where the infimum of the empty subset of $(0,m)$ is taken to be $m$.
 	In \eqref{eq3}, whenever $s_k(s,\mu,r)=0$, the quotient
 	$
 	\frac{\log\mu(B(x,h))}{s_k(s,\mu,r)\log h}
 	$
 	is defined to be $+\infty$ for all $x\in\R^d$ and
 	$r\leq h\leq r^{\frac{m-s}{m}}$.
 	Analogous statements hold for the lower box dimension profile $\lbd^m E$ by replacing $\varliminf_{r\to0}$ with $\varlimsup_{r\to0}$ in \eqref{eq1} and \eqref{eq2}, and replacing $\varlimsup_{r\to0}$ with $\varliminf_{r\to0}$ in \eqref{eq3} and \eqref{eq4}.
 \end{thmm}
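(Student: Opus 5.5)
The plan is to organize everything around Lemma~\ref{lem1}, which reduces $\ubd^m E$ to the growth of
\[
I_\mu(x,r):=\int_r^\infty\mu(B(x,h))h^{-m-1}\dih .
\]
Concretely, $\ubd^m E\ge s$ roughly means that for a sequence $r\to0$ some $\mu$ satisfies $\sup_x r^m I_\mu(x,r)\lesssim r^{s}$. Writing $h=r^\alpha$ splits the integral as
\[
\int_0^1 \mu(B(x,r^\alpha))\,r^{-m\alpha}\log(1/r)\,\dif\alpha
\]
plus a bounded tail. Hence $r^mI_\mu(x,r)\lesssim r^{s-o(1)}$ for all $x$ holds (up to logarithmic factors) exactly when
\[
\mu(B(x,r^\alpha))\le r^{\,s-m(1-\alpha)}\quad\text{for all }\alpha\in(0,1].
\]
For $\alpha\le (m-s)/m$ this is automatic, since the exponent is $\le0$. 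This gives \eqref{eq4} directly. I would prove both inequalities with $s\pm\epsilon$ slack: one direction takes a measure that is good at scale $r$ and bounds $I_\mu$ by splitting the $\alpha$-range. The other uses Chebyshev on $\alpha$-intervals, i.e.\ $\mu(B(x,r^\alpha))\,r^{-m\alpha}\lesssim \int_{r^\alpha}^{2r^\alpha}\mu(B(x,h))h^{-m-1}\dih$, which bounds a single scale by the integral.

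Next I would show that \eqref{eq4} and \eqref{eq3} coincide by rewriting the exponent condition through the recursion \eqref{defisk}. The requirement $\mu(B(x,r^\alpha))\le r^{\alpha\cdot\frac{s-m(1-\alpha)}{\alpha}}$ is a one-scale density bound with exponent $t(\alpha)=m-(m-s)/\alpha$. By the definition of $\alpha(\cdot,\mu,r)$, a violation at exponent $s_k$ forces a violation scale $\alpha_k$ with the mass bound of Lemma~\ref{alphak}(2). Feeding this back, the exponent needed at scale $\alpha_k$ is precisely $s_{k+1}=m-(m-s)/\alpha_k$. So the sequences $(s_k,\alpha_k)$, which are monotone by Lemma~\ref{alphak}(1), track the worst-case scale. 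Their limit is $0$ exactly when no scale in $(\frac{m-s}{m},1]$ violates the bound \eqref{eq4}, i.e.\ when the iteration runs below the threshold $\alpha\le (m-s)/m$, where $s_{k+1}=0$. Conversely, a positive limit $\alpha_\infty>0$ yields a fixed point $s_\infty=m-(m-s)/\alpha_\infty$ with $\sup_x\mu(B(x,r^{\alpha_\infty}))\ge r^{\alpha_\infty s_\infty}=r^{s-m(1-\alpha_\infty)}$. That is a violation of \eqref{eq4}, uniformly in $\mu$, along the relevant scales. This identifies the thresholds in \eqref{eq1}, \eqref{eq2} and \eqref{eq4}. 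The equality of the sup in \eqref{eq1} with the inf in \eqref{eq2} follows because the limit is monotone in $s$, since $\alpha_k$ is non-decreasing in $s$. The form \eqref{eq3} is the same statement phrased as a Frostman exponent ratio over $h\in[r,r^{(m-s)/m}]$ against the current exponent $s_k$.

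The main obstacle will be the order of limits. In \eqref{eq1} we have $\lim_k$, then $\varliminf_r$, then $\inf_\mu$, and the profile requires a single good measure at each scale. Passing from ``for each $k$, $\inf_\mu\alpha_k$ is small along $r\to0$'' to an actual measure with small energy needs a diagonal choice of $k=k(r)$, together with a quantitative bound on how fast $\alpha_k\to0$. I would get this from the geometric decay
\[
m-s_{k+1}=(m-s_k)/\alpha_k\ge(m-s_k)/\alpha_0 ,
\]
which forces termination after finitely many steps once $\alpha_k$ is bounded below $1$. I would also handle the $\epsilon$-losses from Lemma~\ref{alphak}(2) by perturbing $s$ slightly.

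The lower profile is the same argument with every ``sequence of scales'' replaced by ``all small scales'', using \eqref{lem11}.
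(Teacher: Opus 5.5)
Your derivation of \eqref{eq4} from Lemma~\ref{lem1} is sound; it is essentially the paper's part~(3). Your per-pair picture of the recursion is also correct: for fixed $(\mu,r)$, every violation scale $\alpha^\dagger\in(\frac{m-s}{m},1]$ satisfies $\alpha_k(s,\mu,r)\ge\alpha^\dagger$ for all $k$, and a per-pair limit above $\frac{m-s}{m}$ is itself a violation scale. Routing \eqref{eq1}--\eqref{eq3} through \eqref{eq4} is therefore a legitimate alternative to the paper. What is missing is the step that turns this per-pair picture into statements about $L(s)=\lim_{k}\varliminf_{r\to0}\inf_{\mu}\alpha_k(s,\mu,r)$, where $\lim_k$ sits outside.

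First, $L(s)>0$ only says that $\alpha_k(s,\mu,r)>\frac{m-s}{m}$ for all $\mu$ once $r\le R_k$, with $R_k\to0$. For each individual pair the iteration may still die out, so there is no fixed point $\alpha_\infty$ from which to read off a violation uniformly in $\mu$.

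Second, deducing $L(s)=0$ from good measures requires the iteration to terminate within a number of steps independent of $(\mu,r)$. The mechanism you propose for this is wrong. By \eqref{defisk}, $m-s_{k+1}=(m-s)/\alpha_k$ with the original $s$, not $(m-s_k)/\alpha_k$, so nothing compounds. Nor does $\alpha_k\le\alpha_0<1$ force termination: the iteration converges to the largest violation scale, which can lie strictly inside $(\frac{m-s}{m},1)$ while $\alpha_0<1$.

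Third, the direction you flagged as needing a diagonal $k=k(r)$ (from small $\alpha_k$ to a good measure) needs none. One fixed $N$ with $\alpha_N(s,\mu_r,r)<\frac{m-s}{m}$ along a sequence of $r$ already gives $\int_r^\infty\mu_r(B(x,h))h^{-m-1}\dih\le\frac{N+2}{m-s}r^{s-m}$ for all $x$.

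The missing idea is the paper's claim \eqref{theq7}. When $\alpha_0,\dots,\alpha_k>\frac{m-s}{m}$, one has exactly $\alpha_j(s_j-s_{j+1})=(m-s)(1-\alpha_j/\alpha_{j-1})$. Since $\prod_{j=1}^k\alpha_j/\alpha_{j-1}=\alpha_k/\alpha_0\ge\alpha_k>\frac{m-s}{m}$, some $j\le k$ has $\alpha_j(s_j-s_{j+1})\le\epsilon_k:=(m-s)\bigl(1-(\frac{m-s}{m})^{1/k}\bigr)$, and $\epsilon_k\to0$. Lemma~\ref{alphak}(2) then gives $\sup_x\mu(B(x,r^{\alpha_j}))\ge r^{s-m(1-\alpha_j)+\epsilon_k}$ for every $\mu$ and every $r\le R_k$. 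This is an approximate violation with error uniform in $(\mu,r)$, which is all Lemma~\ref{lem1} needs to give $\ubd^m E\le s$.

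In contrapositive form, adapted to your route: suppose $\mu$ has no violation at a slightly larger exponent $s'>s$. Comparing with Lemma~\ref{alphak}(2) at scale $r^{\alpha_k}$ gives $\alpha_k(m-s_k)\le m-s'$, that is, $\alpha_k\le\frac{m-s'}{m-s}\,\alpha_{k-1}$. So the $s$-iteration reaches $0$ within a number of steps depending only on $m,s,s'$. The genuine geometric decay is in $\alpha_k$, and it is driven by the slack $s'-s$, not by $\alpha_0<1$.
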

  \begin{proof}
(1) We first prove \eqref{eq1} and \eqref{eq2}. Let $s\in(0,m)$.
  	If
  	$$\lim_{k\to\infty}\varliminf_{r\to0}\inf_{\mu\in\mP(E)}\alpha_k(s,\mu,r)=0,$$
  	then there exists an integer $N\in\mathbb{N}$ such that  
  	\begin{equation*}
  		\varliminf_{r\to0}\inf_{\mu\in\mP(E)}\alpha_N(s,\mu,r)<\frac{m-s}{m}.
  	\end{equation*}
  	Thus, for any $r_0>0$, there exist $0<r<r_0$ and a measure $\mu_r\in\mP(E)$ such that 
  	\begin{equation}\label{theq1}
  		\alpha_N(s,\mu_r,r)<\frac{m-s}{m}.
  	\end{equation}
	  	For $0\leq k\leq N-1$, if $\alpha_k(s,\mu_r,r)\leq\frac{m-s}{m}$, it follows from \eqref{defisk} and \eqref{defialphak} that $$\alpha_{k+1}(s,\mu_r,r)=s_{k+1}(s,\mu_r,r)=0.$$
  	Then
  	\begin{equation}\label{theq5}
  		\sup_x\int_{r^{\alpha_k(s,\mu_r,r)}}^{r^{\alpha_{k+1}(s,\mu_r,r)}}\mu_r(B(x,h))h^{-m-1}\dih \leq \int_{r^{\alpha_k(s,\mu_r,r)}}^{\infty} h^{-m-1}\dih  \leq \frac{1}{m}r^{s-m}.
  	\end{equation}
	  	If $\alpha_k(s,\mu_r,r)>\frac{m-s}{m}$, the definition of $\alpha_{k+1}(s,\mu_r,r)$   ensures 
\[
\sup_x\mu_r(B(x,h))\leq h^{s_{k+1}(s,\mu_r,r)}
\quad\text{for }r\leq h<r^{\alpha_{k+1}(s,\mu_r,r)}.
\]
Consequently,
  	\begin{align}\label{theq14}
  		\sup_x\int_{r^{\alpha_k(s,\mu_r,r)}}^{r^{\alpha_{k+1}(s,\mu_r,r)}}\mu_r(B(x,h))h^{-m-1}\dih
	  		&\leq\int_{r^{\alpha_k(s,\mu_r,r)}}^{r^{\alpha_{k+1}(s,\mu_r,r)}} h^{s_{k+1}(s,\mu_r,r)-m-1}\dih\notag\\
	  		&\leq\frac{1}{m-s_{k+1}(s,\mu_r,r)} r^{\alpha_k(s,\mu_r,r)(s_{k+1}(s,\mu_r,r)-m)}\notag\\
  		&\leq\frac{1}{m-s} r^{s-m}.
  	\end{align}
  	It follows that for all $x\in\R^d$,
  	\begin{align*}
  		&\int_r^{\infty}\mu_r(B(x,h))h^{-m-1}\dih\\
  		&\leq\int_r^{r^{\alpha(s,\mu_r,r)}}\mu_r(B(x,h))h^{-m-1}\dih+\sum_{k=0}^{N-1}\int_{r^{\alpha_k(s,\mu_r,r)}}^{r^{\alpha_{k+1}(s,\mu_r,r)}}\mu_r(B(x,h))h^{-m-1}\dih+\int_{r^{\alpha_N(s,\mu_r,r)}}^\infty h^{-m-1}\dih\\
  		&\leq\int_r^{r^{\alpha(s,\mu_r,r)}}h^{s-m-1}\dih+\frac{N}{m-s} r^{s-m}+\frac{1}{m} r^{-m\alpha_{N}(s,\mu_r,r)}\quad\text{(by \eqref{theq5} and \eqref{theq14})}\\
  		&\leq \frac{1}{m-s} r^{s-m} + \frac{N}{m-s} r^{s-m} + \frac{1}{m} r^{s-m} \quad\text{(by \eqref{theq1})}\\
  		&\leq\frac{N+2}{m-s} r^{s-m}.
  	\end{align*}
  	According to Lemma \ref{lem1}, we have $\ubd^m E\geq s$. 
  	Consequently, 
  	\begin{equation*}
  		\ubd^m E\geq\sup\Big\{0<s<m: \lim_{k\to\infty}\varliminf_{r\to0}\inf_{\mu\in\mP(E)}\alpha_k(s,\mu,r)=0\Big\}.
  	\end{equation*}
  	
  	We now prove the reverse inequality.
  	Assume that $0<s<m$ is such that
  	$$L(s):=\lim_{k\to\infty}\varliminf_{r\to0}\inf_{\mu\in\mP(E)}\alpha_k(s,\mu,r)>0.$$ 
  	Since $\alpha_j(s,\mu,r)$ is non-increasing in $j$, for each fixed
  	$k\geq1$ we have
  	\[
  	\varliminf_{r\to0}\inf_{\mu\in\mP(E)}
  	\alpha_{k+1}(s,\mu,r)\geq L(s)>0.
  	\]
  	Consequently, there exists $\rho_k>0$ such that
  	$\alpha_{k+1}(s,\mu,r)>0$ for all $0<r<\rho_k$ and
  	$\mu\in\mP(E)$.
  	By the definition of $\alpha$ and \eqref{defialphak}, this implies
  	$s_{k+1}(s,\mu,r)>0$, and hence \eqref{defisk} gives
  	\[
  	\alpha_k(s,\mu,r)>\frac{m-s}{m}.
  	\]
  	Choose  a decreasing sequence $\{R_k\}_{k=1}^\infty \downarrow 0$ such that for all $0<r\leq R_k$ and $\mu\in\mP(E)$,  
  	\begin{equation}\label{theq2-original}
  		\alpha_k(s,\mu,r)>\frac{m-s}{m}.
  	\end{equation}
  	Fix $k\in\mathbb{N}$, $0<r\leq R_k$, and $\mu\in\mP(E)$. 
  	For all $1\leq j\leq k$, since $0<r\leq R_k\leq R_j$, we have
  	$$\alpha_j(s,\mu,r)>\frac{m-s}{m}.$$ 
  	Thus,
\begin{equation*}
	  		s_j(s,\mu,r)\geq s_{j+1}(s,\mu,r)= m-\frac{m-s}{\alpha_j(s,\mu,r)} >0.
  	\end{equation*}
  	It follows that  
  	\begin{align*}
  		\sup_x\int_r^\infty\mu(B(x,h))h^{-m-1}\dih
  		&\geq\sup_x\int_{r^{\alpha_j(s,\mu,r)}}^\infty\mu(B(x,h))h^{-m-1}\dih\\
  		&\geq\sup_x \mu(B(x,{r^{\alpha_j(s,\mu,r)}}))\int_{r^{\alpha_j(s,\mu,r)}}^\infty h^{-m-1}\dih\\
	  		&\geq r^{\alpha_j(s,\mu,r)s_j(s,\mu,r)}\int_{r^{\alpha_j(s,\mu,r)}}^\infty h^{-m-1}\dih\quad\text{(by Lemma \ref{alphak})}\\
	  		&=\frac{1}{m} r^{\alpha_j(s,\mu,r)(s_j(s,\mu,r)-m)}\\
	  		&=\frac{1}{m} r^{s-m+\alpha_j(s,\mu,r)(s_{j}(s,\mu,r)-s_{j+1}(s,\mu,r) )}. 
  	\end{align*}
Consequently,
  	\begin{equation*}
	  		\inf_x\frac{\log\int_r^\infty\mu(B(x,h))h^{-m-1}\dih}{\log r}\leq s-m+\inf_{1\leq j\leq k}\alpha_j(s,\mu,r)(s_{j}(s,\mu,r)-s_{j+1}(s,\mu,r) )-\frac{\log m}{\log r}.
  	\end{equation*}
  	According to Lemma \ref{lem1} and the arbitrariness of $k$, we have 
  	\begin{equation}\label{theq6}
	  		\ubd^m E\leq s + \lim_{k\to\infty}\varlimsup_{r\to0}\sup_{\mu\in\mP(E)}\inf_{1\leq j\leq k}\alpha_j(s,\mu,r)(s_{j}(s,\mu,r)-s_{j+1}(s,\mu,r)).
  	\end{equation}
  	We claim that  
  	\begin{equation}\label{theq7}
	  		\lim_{k\to\infty} \varlimsup_{r\to0}\sup_{\mu\in\mP(E)}\inf_{1\leq j\leq k}\alpha_j(s,\mu,r)(s_{j}(s,\mu,r)-s_{j+1}(s,\mu,r))=0.
  	\end{equation}
  	We prove \eqref{theq7} by contradiction. Assume there exists an $0<\epsilon<m-s$ such that
  	\begin{equation*}
	  		\lim_{k\to\infty}\varlimsup_{r\to0}\sup_{\mu\in\mP(E)}\inf_{1\leq j\leq k}\alpha_j(s,\mu,r)(s_{j}(s,\mu,r)-s_{j+1}(s,\mu,r))>\epsilon. 
  	\end{equation*}
  	Let $k_0$ be the least integer such that 
  	\begin{equation*}
  		\left(1-\frac{\epsilon}{m-s}\right)^{k_0}<\frac{m-s}{m}.
  	\end{equation*}
  	Then there exist $0<r_0<R_{k_0}$ and $\mu_{0}\in\mP(E)$ such that 
  	\begin{equation*}
	  		\inf_{1\leq j\leq k_0}\alpha_j(s,\mu_0,r_0)(s_{j}(s,\mu_0,r_0)-s_{j+1}(s,\mu_0,r_0))>\epsilon.
  	\end{equation*}
	  	It is clear that for all $1\leq j\leq k_0$, $\alpha_j(s,\mu_0,r_0)>0$ and $s_{j}(s,\mu_0,r_0)>0$.
  	According to \eqref{defisk}, we have
  	\begin{equation*}
	  		\epsilon<\alpha_j(s,\mu_0,r_0)(s_{j}(s,\mu_0,r_0)-s_{j+1}(s,\mu_0,r_0))\leq (m-s)\left(1 - \frac{\alpha_j(s,\mu_0,r_0)}{\alpha_{j-1}(s,\mu_0,r_0)}\right).
  	\end{equation*}
  	Thus, for all $1\leq j\leq k_0$,
  	\begin{equation*}
  		\frac{\alpha_j(s,\mu_0,r_0)}{\alpha_{j-1}(s,\mu_0,r_0)}<1-\frac{\epsilon}{m-s}.
  	\end{equation*}
  	Telescoping the product, we deduce
  	\begin{equation*}
  		\alpha_{k_0}(s,\mu_0,r_0)=\prod_{j=1}^{k_0}\frac{\alpha_j(s,\mu_0,r_0)}{\alpha_{j-1}(s,\mu_0,r_0)}\cdot \alpha_0(s,\mu_0,r_0)<\left(1-\frac{\epsilon}{m-s}\right)^{k_0}<\frac{m-s}{m},
  	\end{equation*}
  	which directly contradicts \eqref{theq2-original}. 
  	Therefore, claim \eqref{theq7} holds. 
  	Substituting \eqref{theq7} into \eqref{theq6} gives $\ubd^m E\leq s$. 
  	Thus we conclude that
  	\begin{equation*}
  		\ubd^m E\leq\sup\Big\{0<s<m: \lim_{k\to\infty}\varliminf_{r\to0}\inf_{\mu\in\mP(E)}\alpha_k(s,\mu,r)=0\Big\}.
  	\end{equation*}
  	Since $\lim_{k\to\infty}\varliminf_{r\to0}\inf_{\mu\in\mP(E)}\alpha_k(s,\mu,r)$ is non-decreasing in $s$, it is clear that \eqref{eq1} and \eqref{eq2} are equivalent.
  	This has completed the proof of \eqref{eq1} and \eqref{eq2}.

  	(2) Let $s\in(0,m)$.  If
  	\[
  	L(s):=\lim_{k\to\infty}\varliminf_{r\to0}
  	\inf_{\mu\in\mP(E)}\alpha_k(s,\mu,r)>0,
  	\]
   the argument in part (1) shows that, for each fixed
  	$k\geq1$, all sufficiently small $r>0$ and all $\mu\in\mP(E)$
  	satisfy $\alpha_k(s,\mu,r)>\frac{m-s}{m}$ and $s_k(s,\mu,r)>0$.
  	 Consequently,
  	\[
  	\lim_{k\to\infty}\varlimsup_{r\to0}
  	\sup_{\mu\in\mP(E)}	\inf_{r\leq h\leq r^{\frac{m-s}{m}}}\inf_x\frac{\log\mu(B(x,h))}{s_k(s,\mu,r)\log h}\leq1.
  	\]
  	If $L(s)=0$, then for all $k$ large enough,
  	\[
  	\varliminf_{r\to0}\inf_{\mu\in\mP(E)}
  	\alpha_k(s,\mu,r)<\frac{m-s}{m}.
  	\]
  	Thus there exists arbitrarily small $r>0$ and  $\mu_r\in\mP(E)$
  	with $\alpha_k(s,\mu_r,r)<\frac{m-s}{m}$.
  	Clearly, $s_{k+1}(s,\mu_r,r)=0$. 
  	  Hence,  
  	\[
  	\varlimsup_{r\to0}\sup_{\mu\in\mP(E)}\inf_{r\leq h\leq r^{\frac{m-s}{m}}}\inf_x\frac{\log\mu(B(x,h))}{s_{k+1 }(s,\mu,r)\log h}=+\infty.
  	\]
  	These two cases show that
  	\[
  	L(s)>0
  	\quad\Longleftrightarrow\quad
  	\lim_{k\to\infty}\varlimsup_{r\to0}
  	\sup_{\mu\in\mP(E)}	\inf_{r\leq h\leq r^{\frac{m-s}{m}}}\inf_x\frac{\log\mu(B(x,h))}{s_k(s,\mu,r)\log h}\leq1,
  	\]
  	which proves the equivalence of \eqref{eq2} and \eqref{eq3}.

(3) It remains to prove \eqref{eq4}. Fix $s\in(0,m)$ strictly below its right-hand side.
Then there exists $\epsilon>0$ such that, for arbitrarily small $r\in(0,1)$, there is a measure $\mu_r\in\mP(E)$ satisfying
\[
\inf_{\frac{m-s}{m}<\alpha\leq1}\inf_x
\frac{\log\mu_r(B(x,r^\alpha))}
{(s-m(1-\alpha))\log r}>1+\epsilon.
\]
Consequently, for every $x\in\R^d$ and $r\leq h<r^{\frac{m-s}{m}}$,
\[
\mu_r(B(x,h))\leq r^{(s-m)(1+\epsilon)}h^{m(1+\epsilon)}.
\]
It follows that
\begin{align*}
\int_r^\infty\mu_r(B(x,h))h^{-m-1}\dih
\leq r^{(s-m)(1+\epsilon)}
\int_r^{r^{\frac{m-s}{m}}}h^{m\epsilon-1}\dih +\int_{r^{\frac{m-s}{m}}}^\infty h^{-m-1}\dih 
\leq\frac{1+\epsilon^{-1}}m\,r^{s-m}.
\end{align*}
By Lemma \ref{lem1}, $\ubd^m E\geq s$.
Then we obtain
\begin{equation*}
	\ubd^m E\geq\inf\Big\{0<s<m:  \varlimsup_{r\to0}\sup_{\mu\in\mP(E)}\inf_{\frac{m-s}{m}<\alpha\leq1}\inf_x\frac{\log\mu(B(x,r^\alpha))}{(s-m(1-\alpha))\log r}\leq 1 \Big\}.
\end{equation*}

Conversely, suppose that $s\in(0,m)$ satisfies
\[
\varlimsup_{r\to0}\sup_{\mu\in\mP(E)}
\inf_{\frac{m-s}{m}<\alpha\leq1}\inf_x
\frac{\log\mu(B(x,r^\alpha))}
{(s-m(1-\alpha))\log r}\leq1.
\]
Fix $\delta>0$. For all sufficiently small $r\in(0,1)$ and every $\mu\in\mP(E)$, there exist $\alpha\in(\frac{m-s}{m},1]$ and $x\in\R^d$ such that
\[
\mu(B(x,r^\alpha))>r^{(1+\delta)(s-m+m\alpha)}.
\]
It follows that
\begin{align*}
\sup_y\int_r^\infty\mu(B(y,h))h^{-m-1}\dih&\geq\int_{r^\alpha}^\infty\mu(B(x,h))h^{-m-1}\dih\\
&\geq\frac1m r^{-m\alpha}\mu(B(x,r^\alpha))\\
&>\frac1m r^{s-m+\delta(s-m+m\alpha)}\\
&\geq\frac1m r^{s(1+\delta)-m}.
\end{align*}
According to Lemma \ref{lem1},  we have $\ubd^m E\leq s(1+\delta)$.
Letting $\delta\downarrow0$ yields $\ubd^m E\leq s$.
This proves the reverse inequality in \eqref{eq4}.

The proof for the lower box dimension profile is analogous, with the scale quantifiers ``for all sufficiently small \(r>0\)'' and ``for arbitrarily small \(r>0\)'' interchanged.
  \end{proof}
  
  \section{Estimating box dimension profiles using intermediate dimensions}\label{sec:proof}

  In this section, we will explore the connection between box dimension profiles and intermediate dimensions.
 
  \begin{pro}\label{thm:upper}
 	Let $E$ be a non-empty and bounded set. Then
 	\begin{equation}\label{upper1}
 		\ubd^mE
 		\leq
 		\inf_{\theta\in(0,1]}
 		\left\{
 		m(1-\theta)+\theta\cdot\uid E
 		\right\}
 	\end{equation}
 	and
 	\begin{equation}\label{upper2}
 		\lbd^mE
 		\leq
 		\inf_{\theta\in(0,1]}
 		\left\{
 		m(1-\theta)+\theta\cdot\lid E
 		\right\}.
 	\end{equation}
 \end{pro}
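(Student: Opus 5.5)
We use the characterization in Lemma~\ref{lem1}: for any $\mu\in\mP(E)$, an upper bound on $\ubd^mE$ follows once we show that, for all small $r$, every $\mu$ has a point $x$ where $\int_r^\infty\mu(B(x,h))h^{-m-1}\dih$ is large. Fix $\theta\in(0,1]$ and $t>\uid E$; it suffices to show $\ubd^mE\le m(1-\theta)+\theta t$ and then let $t\downarrow\uid E$ and take the infimum over $\theta$. The case $\theta$ with $m(1-\theta)+\theta t\ge m$ is trivial when $\uid E\ge m$ (since $\ubd^mE\le m$ always, which follows from $\phi_r^m\ast\mu\ge r^m|x|^{-m}\gtrsim r^m$ on the bounded set $E$). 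So assume $t<m$.

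The key step is covering-to-mass. Since $t>\uid E$, for all sufficiently small $r$ there is a cover $\{U_i\}$ of $E$ with $r\le|U_i|\le r^\theta$ and $\sum|U_i|^t\le1$. Given any $\mu\in\mP(E)$, we have $1\le\sum_i\mu(U_i)$, and we compare with $\sum|U_i|^t\le 1$: there is an index $i$ with $\mu(U_i)\ge|U_i|^t$. Pick $x_i\in U_i$ and set $h_i=|U_i|\in[r,r^\theta]$, so $\mu(B(x_i,h_i))\ge h_i^t$.

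Then we estimate
\[
\int_r^\infty\mu(B(x_i,h))h^{-m-1}\dih\ge\mu(B(x_i,h_i))\int_{h_i}^\infty h^{-m-1}\dih\ge\frac1m h_i^{t-m}\ge\frac1m r^{\theta(t-m)},
\]
using $t<m$ and $h_i\le r^\theta$. Hence for every $\mu$, $\inf_x\frac{\log\int_r^\infty\mu(B(x,h))h^{-m-1}\dih}{\log r}\le\theta(t-m)+\frac{\log m}{-\log r}\cdot(-1)$, and Lemma~\ref{lem1} gives $\ubd^mE\le m+\theta(t-m)=m(1-\theta)+\theta t$. The only delicate point is ensuring the inner inequality holds for all small $r$ in the upper case (which is exactly what the definition of $\uid$ provides), and for \eqref{upper2} one uses instead $t>\lid E$, which yields such covers only for a sequence $r\to0$; this matches the $\varliminf$ in \eqref{lem11}, so the same computation gives $\lbd^mE\le m(1-\theta)+\theta t$.
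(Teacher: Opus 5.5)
Your proof is correct, and it is more direct than the paper's.

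Both arguments start from the same concentration fact: for all small $r$ and every $\mu$ there are $x$ and $h\in[r,r^\theta]$ with $\mu(B(x,h))\ge h^t$. The paper reads this off from \eqref{uid}; you derive it by pigeonholing the cover, which is just the easy half of the mass distribution principle.

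The paper then recasts this as $\alpha(t,\mu,r)\ge\theta$. With $s=m(1-\theta)+\theta t$, it proves by induction that $s_k(s,\mu,r)\ge t$ and $\alpha_k(s,\mu,r)\ge\theta$ for all $k$. It then concludes from the characterization \eqref{eq2} of Theorem~\ref{main}, whose proof in turn needs Lemma~\ref{lem1} plus the iterative contradiction argument behind \eqref{theq7}. You observe instead that one scale already suffices, because $\frac1m h^{t-m}\ge\frac1m r^{\theta(t-m)}=\frac1m r^{s-m}$, so Lemma~\ref{lem1} alone finishes the job.

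The paper's route mainly serves to illustrate how \eqref{eq2} is used. Yours is shorter and also more flexible. Evaluate the kernel at scale $\rho=r^a$ with $a=\frac{(1-\theta)t+m\theta}{m}$, and split into two cases: if $h\le\rho$, use $\mu(B(x,\rho))\ge r^t$; if $h>\rho$, use $h^{t-m}\ge r^{\theta(t-m)}$. Both cases give $\sup_x\int_\rho^\infty\mu(B(x,u))u^{-m-1}\dif u\ge\frac1m r^{\theta(t-m)}$. Lemma~\ref{lem1} then yields $\ubd^mE\le\frac{mt}{m\theta+(1-\theta)t}$. This is the sharper bound of Theorem~\ref{upperbound2}, which the paper obtains separately via a Cauchy--Schwarz capacity estimate.

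Two small fixes are needed. First, the error term is $+\frac{\log m}{-\log r}$; your extra factor $-1$ is a sign slip, harmless since the term tends to $0$. Second, to justify $\sum_i\mu(U_i)\ge1$, replace each $U_i$ by its closure and note that the cover is finite: it has at most $r^{-t}$ sets, since $|U_i|\ge r$ and $\sum_i|U_i|^t\le1$. The closed cover therefore also covers $\overline E$, so the argument applies to $\mu\in\mP(\overline E)$, consistent with Lemma~\ref{lem1}.
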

 
 \begin{proof}
 		We only prove \eqref{upper1}. Since the proof of \eqref{upper2} is very similar, we omit it. Fix $\theta\in(0,1]$.
 		If $\uid E\geq m$, then
 		\[
 		\ubd^m E\leq m\leq m(1-\theta)+\theta\cdot\uid E.
 		\]
 		Otherwise, choose $\uid E<t<m$ and set
 		\[
 		s=m(1-\theta)+\theta t.
 		\]
 		It is clear that $0<s<m$ and $s\geq t$.
 		According to \eqref{uid}, there exists $r_0\in(0,1)$ such that for all $0<r<r_0$,
 		\[
 		\sup_{\mu\in\mP(E)}\inf_{r\leq h\leq r^\theta}\inf_x
 		\frac{\log\mu(B(x,h))}{\log h}<t.
 		\]
 		Thus, for each $0<r<r_0$ and each $\mu\in\mP(E)$, there exist
 		$h\in[r,r^\theta]$ and $x\in\R^d$ such that
 		\[
 		\mu(B(x,h))>h^t.
 		\]
 		Thus
 		\[
 		\alpha(t,\mu,r)\geq\theta.
 		\]
 		
 		We next prove by mathematical induction that for all $k\geq0$, all $0<r<r_0$,
 		and all $\mu\in\mP(E)$,
 		\[
 		s_k(s,\mu,r)\geq t
 		\quad\text{and}\quad
 		\alpha_k(s,\mu,r)\geq\theta.
 		\]
 		Since $s_0(s,\mu,r)=s\geq t$ and $\alpha(s,\mu,r)$ is non-decreasing in $s$,
 		the statement holds for $k=0$.
 		Assume that it holds for some $k\geq0$.
 		It follows from \eqref{defisk} that
 		\begin{align*}
 			s_{k+1}(s,\mu,r)
 			=\max\left\{0,m-\frac{m-s}{\alpha_k(s,\mu,r)}\right\}
 			\geq m-\frac{m-s}{\theta}=t.
 		\end{align*}
 		Consequently,
 		\[
 		\alpha_{k+1}(s,\mu,r)
 		=\alpha(s_{k+1}(s,\mu,r),\mu,r)
 		\geq\alpha(t,\mu,r)\geq\theta.
 		\]
 		This completes the induction. Therefore,
 		\[
 		\lim_{k\to\infty}\varliminf_{r\to0}
 		\inf_{\mu\in\mP(E)}\alpha_k(s,\mu,r)\geq\theta>0.
 		\]
 		By \eqref{eq2}, we have
 		\[
 		\ubd^m E\leq s=m(1-\theta)+\theta t.
 		\]
 		Letting $t\downarrow\uid E$ and taking the infimum over $\theta\in(0,1]$
 		proves \eqref{upper1}.
\end{proof}

By Marstrand's projection theorem, if a set has equal Hausdorff and box
dimensions, then 
\begin{equation*}
	\dim_B P_V(E)=\min\{m,\dim_B E\}
\end{equation*}
	for $\gamma_{d,m}$-almost all $V\in G(d,m)$.  However, Proposition \ref{thm:upper} shows that
when a set exhibits highly irregular behavior across different   scales, in
other words, when
\[
\inf_{\theta\in(0,1]}
\left\{
m(1-\theta)+\theta\cdot\uid E
\right\}
\]
is attained at neither endpoint,   it is clear
\begin{equation}\label{drop}
	\ubd^m E<\min\{m,\ubd E\}.
\end{equation}
 In the following example, we show that when taking the  product of a highly regular set and a less regular set, a dimension drop in \eqref{drop} may occur.
 
 \begin{ex}\label{ex}
 	Let $F_p = \{0\} \cup \{n^{-p}\}_{n=1}^\infty$ for some $p > 0$, and let $A\subset\R$ be a set with
 	\begin{equation*}
 		\dim_H A = \dim_B A = c \in\big((\frac{p}{p+1})^2,1\big).
 	\end{equation*}
 	It was proved in \cite[Proposition 3.1]{FFK} that $$\dim_\theta F_p = \frac{\theta}{p+\theta}.$$
 Since
 	\begin{equation*}
 		\dim_\theta A + \dim_\theta F_p \leq \dim_\theta (A \times F_p) \leq\dim_B A+ \dim_\theta F_p
 	\end{equation*}
 	(see \cite[Proposition 2.5]{FFK}), we have
 	\begin{equation}\label{ex1eq1}
 		\dim_\theta (A \times F_p) = c + \frac{\theta}{p+\theta}.
 	\end{equation}
 	Let $$g(\theta)=1 - \theta + \theta\cdot \dim_{\theta} (A \times F_p)=1 - \theta + \theta \left( c + \frac{\theta}{p+\theta} \right).$$ 
 	Moreover, it is easy to get
 	\begin{equation*}
 		s_0 = \inf_{\theta\in(0,1]}g(\theta)=g\left( p (\frac{1}{\sqrt{c}} - 1)\right) =1 - (1-\sqrt{c})^2 p.
 	\end{equation*}
 	Applying Proposition \ref{thm:upper}, we conclude that a strict dimension drop occurs, that is,
 	\begin{equation*}
 		\ubd^1 (A \times F_p) \leq   1 - (1-\sqrt{c})^2 p < \min\left\{1, c + \frac{1}{p+1}\right\}.
 	\end{equation*}
 \end{ex}

 {
The following theorem improves the upper bound in Proposition~\ref{thm:upper}, although Proposition~\ref{thm:upper} has a simpler form.

 \begin{thmm}\label{upperbound2}
    Let $E\subset\R^d$ be non-empty and bounded.
     Then
    \begin{equation}\label{upperbound2-upper}
        \ubd^mE
        \leq
        \inf_{\theta\in(0,1]}
        \left\{\frac{m\uid E}
        {m\theta+(1-\theta)\uid E}
        \right\}
    \end{equation}
    and
    \begin{equation}\label{upperbound2-lower}
        \lbd^mE
        \leq
        \inf_{\theta\in(0,1]}
        \left\{\frac{m\lid E}
        {m\theta+(1-\theta)\lid E}
        \right\}.
    \end{equation}
 \end{thmm}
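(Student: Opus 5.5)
The plan is to rerun the iteration from the proof of Proposition~\ref{thm:upper}, with one change. That proof used the single lower bound $\alpha(s',\mu,r)\geq\theta$ for $s'\geq t>\uid E$. Here I would use a lower bound on $\alpha(s',\mu,r)$ that varies with $s'$, derived from the same $\theta$-information applied at several base scales. Write $u=\uid E$ and fix $\theta\in(0,1]$.

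If $u\geq m$, the right-hand side of \eqref{upperbound2-upper} is at least $m$, since $mu\geq m\theta u+m(1-\theta)u\cdot\frac{u}{m}\cdot\frac{m}{u}$ rearranges to $u\geq m$. So assume $u<t<m$. Note that the target value $f$ satisfies $\frac1f=\frac{\theta}{t}+\frac{1-\theta}{m}$, a weighted harmonic mean of $t$ and $m$. This harmonic structure is what I expect a scale-chaining argument to produce.

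\textbf{Step 1.} By \eqref{uid}, for every sufficiently small $\rho$ and every $\mu\in\mP(E)$ there are $h\in[\rho,\rho^\theta]$ and $x$ with $\mu(B(x,h))>h^t$. I would apply this with $\rho=r^\beta$ for every $\beta\in(0,1]$, not only $\beta=1$. This yields, for each $\beta$, an exponent $a\in[\theta\beta,\beta]$ with $\sup_x\mu(B(x,r^a))>r^{at}$. Since $\mu$ is a probability measure, the bound $\sup_x\mu(B(x,r^a))\geq\sup_x\mu(B(x,r^{a'}))$ holds for all $a\leq a'$.

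\textbf{Step 2.} From this family I would extract a lower bound $\alpha(s',\mu,r)\geq g(s')$ for $s'$ slightly below $t$ as well, by enlarging balls: a mass bound at radius $r^a$ also holds at every radius $r^{\phi}$ with $\phi\leq a$. The hope is that this gives $g(s')\geq\theta$ on $s'\geq t$, together with a controlled decay for smaller $s'$.

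\textbf{Step 3.} I would feed $g$ into the recursion $s_{k+1}=m-(m-s)/\alpha_k$. The aim is to show that, for $s$ slightly above $f$, the sequence $s_k$ decreases to a positive limit. Then $\alpha_k\geq g(\lim s_k)>0$ uniformly in $k$, $r$ and $\mu$, and \eqref{eq2} gives $\ubd^m E\leq s$. An alternative route to the same conclusion is to bypass the recursion and verify the criterion in \eqref{eq4} directly. That requires, for each $\mu$, an $\alpha>\frac{m-s}{m}$ with $\mu(B(x,r^\alpha))\gtrsim r^{s-m+m\alpha}$. I would choose $\beta$ adaptively according to where the witnessing scale $a$ falls in $[\theta\beta,\beta]$. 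If $a$ is near $\beta$, the Frostman failure is strong relative to the radius. If $a$ is near $\theta\beta$, I would restart at base scale $r^{a/\theta}$ and chain the two estimates.

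\textbf{Step 4.} Let $t\downarrow u$, then take the infimum over $\theta$. The lower-profile statement \eqref{upperbound2-lower} follows in the same way, using $\lid$, all small scales, and the lower versions of \eqref{eq2} and \eqref{eq4}.

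The main obstacle is Step 2 or its chained replacement. Enlarging balls alone only helps when $s'\geq t$, which reproduces Proposition~\ref{thm:upper} and nothing better. A genuine improvement must come from combining witnesses at different base scales $r^\beta$. Concretely, I would first optimise over a two-scale chain, $\beta=1$ and then $\beta=a/\theta$, and check that the optimal split of $\log(1/r)$ between the two scales reproduces the identity $\frac1f=\frac\theta t+\frac{1-\theta}{m}$. If that works, I would extend the chain geometrically to arbitrarily many scales.
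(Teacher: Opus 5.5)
\textbf{There is a genuine gap.} Your proposal stops exactly where the proof has to happen. Steps~2--3 are stated as hopes (``If that works''), and no estimate better than Proposition~\ref{thm:upper} is derived.

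Moreover, the framework of Steps~1--2 cannot produce one. The quantities $\alpha(s',\mu,r)$, and your witnesses at base scales $r^\beta$ with $\beta\le 1$, only involve $\sup_x\mu(B(x,h))$ for $h\ge r$. That information, together with monotonicity in $h$, does not rule out the profile $\log\sup_x\mu(B(x,r^a))/\log r=\max\{0,\,ta-\epsilon,\,ma+s-m+\epsilon\}$ for $a\in[0,1]$. For this profile a witness at $a=\theta\beta$ exists for every $\beta\in(0,1]$ whenever $s<m(1-\theta)+\theta t-\epsilon$. Yet $\sup_x\mu(B(x,h))h^{-m}\le r^{s-m+\epsilon}$ for all $h\in[r,1]$. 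Hence $\sup_x\int_r^\infty\mu(B(x,h))h^{-m-1}\dih\lesssim r^{s-m+\epsilon}\log(1/r)$, and the criterion of Lemma~\ref{lem1} (equivalently \eqref{eq4}) fails. So no lower bound $g$, and no run of the recursion \eqref{defisk}--\eqref{defialphak}, fed only with this information can go below $m(1-\theta)+\theta t$. Chaining further base scales above $r$ does not help either: the missing information lives at scales below $r$.

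\textbf{The missing idea.} The paper does not use $\alpha$ or the recursion here; it argues with covers. For small $r$, take $\{U_i\}$ with $r\le|U_i|\le r^\theta$ and $\sum_i|U_i|^t\le 1$. Partition $E$ subordinate to the cover and apply Cauchy--Schwarz to the energy. This gives $C_\rho^m(E)\le\sum_i\max\{1,(|U_i|/\rho)^m\}\le r^{-t}+\rho^{-m}r^{\theta(m-t)}$. The capacity scale $\rho=r^a$ with $a=\frac{(1-\theta)t+m\theta}{m}\le 1$ balances the two terms, so $\ubd^mE\le t/a=f$. The harmonic mean comes from balancing a covering scale against a larger capacity scale, not from a chain.

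\textbf{Repair within your framework.} If you prefer to stay with witnesses, the mirror-image fix is a single base scale strictly below $r$. For $s\in(f,m)$, pick $\beta\in\bigl(\frac{m-s}{\theta(m-t)},\frac{s}{t}\bigr)$; this interval is non-empty exactly when $s>f$. Take $h_0\in[r^\beta,r^{\theta\beta}]$ and $x$ with $\mu(B(x,h_0))>h_0^t$.
If $h_0\ge r$, then $h_0<r^{(m-s)/(m-t)}$, so $\mu(B(x,h_0))h_0^{-m}>h_0^{t-m}>r^{s-m}$.
If $h_0<r$, then $\mu(B(x,r))>r^{\beta t}>r^{s}$.
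In both cases $\int_r^\infty\mu(B(x,h))h^{-m-1}\dih>\frac{1}{m}r^{s-m}$, and Lemma~\ref{lem1} gives $\ubd^mE\le s$. The lower profile is identical along a sequence of scales. Your ``restart at $r^{a/\theta}$'' points in this direction, but no optimisation over chains is needed.

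\textbf{A minor error.} Your reduction for $u\ge m$ is garbled: the displayed inequality is just $mu\ge mu$. The correct equivalence is $\frac{mu}{m\theta+(1-\theta)u}\ge m\iff u\ge m$, and you also need $\ubd^mE\le m$.
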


 \begin{proof}
  	Without loss of generality, we assume that there exists $\theta\in(0,1]$ such that $0<\uid E<m$.
     Fix $\theta$ and suppose that $\uid E<s<m$. Then there exists $r_0>0$ such that for any   $0<r<r_0$ there exists  a finite cover $\{U_i\}_{i=1}^{M_r}$of $E$,   
    with $U_i$ closed and 
  
\begin{equation*}
        r\leq |U_i|\leq r^\theta,
        \qquad \sum_{i} |U_i|^s\leq1.
    \end{equation*}
    Thus there is a   partition $\{A_i\}_i$ of $E$
    with $A_i\subset U_i$.
    Let
   \[
   \rho=r^a,\text{ where } a=\frac{(1-\theta)s+m\theta}{m}.
   \] 
    If $x,y\in A_i$, then $\phi_\rho^m(x-y)\geq \phi_\rho^m(|U_i|)$. Hence, for every
    $\mu\in\mP(E)$, we have
    \begin{align*}
        \iint\phi_\rho^m(x-y)\dif\mu(y)\dif\mu(x)
        \geq& \sum_i \phi_\rho^m(|U_i|)\mu(A_i)^2\\
        \geq& \frac{\big(\sum_i\mu(A_i)\big)^2}
        {\sum_i\phi_\rho^m(|U_i|)^{-1}}\quad\text{(using Cauchy--Schwarz inequality)}\\
        =&\left(\sum_i
        \max\{1,\left(\frac{|U_i|}{\rho}\right)^m\}\right)^{-1}.
    \end{align*}
    Taking the infimum over $\mu$  yields
    \begin{align*}
        C_\rho^m(E)
        &\leq \sum_{i=1}^{M_r}\max\{1,\left(\frac{|U_i|}{\rho}\right)^m\}\notag\\
        &\leq {M_r}+\rho^{-m}\sum_{i=1}^{M_r}|U_i|^m\notag\\
        &\leq \big(r^{-s}+\rho^{-m}r^{\theta(m-s)}\big)
        \sum_{i=1}^{M_r}|U_i|^s\notag\\
        &\leq r^{-s}+\rho^{-m}r^{\theta(m-s)}\\
        &\leq 2r^{-s}.  
    \end{align*}
  Clearly,
\begin{equation*}
        \frac{\log C_{r^a}^m(E)}{-\log r^a}
        \leq \frac{s}{a}+\frac{\log2}{a(-\log r)}.
    \end{equation*}

 It follows that
    \[
        \ubd^m E\leq\frac{s}{a}
        =\frac{ms}{m\theta+(1-\theta)s}.
    \]
    Letting $s\searrow \uid E$ gives  
    \[
        \ubd^m E 
        \leq\frac{m\uid E}{m\theta+(1-\theta)\uid E}.
    \]
     Taking the infimum over $\theta\in(0,1]$ proves
    \eqref{upperbound2-upper}.

    The proof of \eqref{upperbound2-lower} is analogous to that of
    \eqref{upperbound2-upper}: one only replaces the assertion that the
    cover exists for every sufficiently small $r$ by the existence of a
    sequence $\{r_i\}_i$ with $r_i\downarrow0$. Thus we omit the proof. 
 \end{proof}
 }

\begin{remark}\label{remark1}
 	Theorem \ref{upperbound2} improves the upper bound in
 	Proposition \ref{thm:upper}. Indeed, for every $\theta\in(0,1]$,
 	\begin{align*}
 		m(1-\theta)+\theta\cdot\uid E
 		-\frac{m\uid E}{m\theta+(1-\theta)\uid E}=
 		\frac{\theta(1-\theta)(m-\uid E)^2}
 		{m\theta+(1-\theta)\uid E}
 		\geq0.
 	\end{align*}
 	In particular, applying Theorem \ref{upperbound2} to the set 
 	$A\times F_p$ from Example \ref{ex} yields
 	\[
 	\ubd^1(A\times F_p)
 	\leq\frac{(1+c)^2}{(1+c)^2+p(1-c)^2}
 	<1-(1-\sqrt{c})^2p.
 	\]
    \end{remark}

We now turn to lower bounds for box dimension profiles. To formulate
these bounds, we first introduce a quantity determined by the upper
Assouad spectrum.

\begin{defi}\label{def:Km}
    Let $E\subset\R^d$ be non-empty and bounded, and let $m>0$.
    For $s\geq0$, define
 
\begin{equation*}
        K_m(s)=\max\left\{0,\sup_{0<\eta<1}
        \left[m\eta-
        \left(s-(1-\eta)\overline{\dim}_A^\eta E\right)_+\right]
        \right\},
    \end{equation*}
    where $u_+=\max\{u,0\}$.
\end{defi}
If $\dim_{qA}E=0$, then  the definition of $K_m$ gives $K_m(0)=m$. If
$\dim_{qA}E>0$, let 
    $0< s\leq\dim_{qA} E$, and let
\begin{align*}
	f(\eta):&=m\eta-\big(s-(1-\eta)\dim_{qA} E\big)_+\\
	& =\begin{cases}
		m\eta, \quad&0<\eta\leq1-\frac{s}{\dim_{qA}E};\\
		(m-\dim_{qA}E)\eta+\dim_{qA}E-s,\quad&1-\frac{s}{\dim_{qA}E}\leq\eta<1.
	\end{cases}
\end{align*}
Thus,
\begin{align}\label{eq:Km-quasi}
	K_m(s)
	\leq\max\left\{0,\sup_{0<\eta<1}
	f(\eta)\right\}
	=\max\left\{m-s, \frac{m(\dim_{qA} E-s)}{\dim_{qA} E}\right\}.
\end{align}

Moreover, it is easy to see that $K_m$ is $1$-Lipschitz and hence continuous. 

 \begin{thmm}\label{lowerbound-spectrum}
    Let $E\subset\R^d$ be non-empty and bounded. Let $m>0$, and let  $K_m$ be defined as in Definition~\ref{def:Km}. Then we have
    \begin{equation}\label{lowerbound-spectrum-upper}
        \ubd^m E\geq\sup_{\theta\in(0,1]}
        \min\left\{\uid E,
        \frac{m\uid E}{\uid E+\theta K_m(\uid E)}\right\}
    \end{equation}
    and
    \begin{equation}\label{lowerbound-spectrum-lower}
        \lbd^m E\geq\sup_{\theta\in(0,1]}
        \min\left\{\lid E,
        \frac{m\lid E}{\lid E+\theta K_m(\lid E)}\right\}.
    \end{equation}
 \end{thmm}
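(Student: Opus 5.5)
The plan is to apply Lemma~\ref{lem1} directly. For a given target exponent $t$ and all sufficiently small $r$ (respectively along a sequence $r\to0$), it suffices to produce a measure $\mu\in\mP(E)$ with
\[
\sup_x\int_r^\infty\mu(B(x,h))h^{-m-1}\dih\lesssim r^{t-m}
\]
up to subpolynomial factors. That gives $\ubd^mE\geq t$ (respectively $\lbd^mE\geq t$). The measures will be the intermediate-dimension Frostman measures of Lemma~\ref{Frostman}, built at an auxiliary scale $\rho$, with $r=\rho^\lambda$ for a suitable $\lambda\geq1$. I treat \eqref{lowerbound-spectrum-upper}; the lower profile is identical, using part (2) of Lemma~\ref{Frostman} for all small scales.

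Fix $\theta$, $s<\uid E$ and $\eta\in(0,1)$, and take $A>\overline{\dim}_A^\eta E$. Lemma~\ref{Frostman}(1) gives arbitrarily small $\rho$ and $\mu=\mu_\rho$ with $\mu(B(x,h))\le ch^s$ for $\rho\le h\le\rho^\theta$. I split the integral into three ranges.
\begin{enumerate}
\item[(i)] For $r\le h\le\rho$, use $\mu(B(x,h))\le\mu(B(x,\rho))\le c\rho^s$. The contribution is $\lesssim\rho^sr^{-m}$.
\item[(ii)] For $\rho\le h\le\rho^\theta$, the Frostman bound gives a contribution $\lesssim\rho^{s-m}$, since $s<m$ can be assumed (otherwise the bound is trivial).
\item[(iii)] For $h\ge\rho^\theta$, cover $B(x,h)\cap E$ by $N_{\rho^\theta}(B(x,h)\cap E)$ balls of radius $\rho^\theta$. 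Where the spectrum applies this number is $\lesssim(h/\rho^\theta)^A$, giving
\[
\mu(B(x,h))\lesssim\min\{1,\rho^{\theta s}(h/\rho^\theta)^A\}.
\]
\end{enumerate}

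The main obstacle is range (iii). The scale constraint of the spectrum ($\rho^\theta\le h^{1/\eta}$, up to the usual reparametrisation) only allows the covering estimate for $h\le\rho^{\theta\eta}$. Above that I use $\mu\le1$, or monotonicity from the endpoint $h=\rho^{\theta\eta}$. Integrating $h^{-m-1}$ against this bound, the worst point is at $h=\rho^{\theta\eta}$. There the mass is at most $\rho^{\theta(s-(1-\eta)A)_+}$, after noting that the bound $\rho^{\theta s}(h/\rho^\theta)^A$ is capped at $1$. The contribution is therefore
\[
\lesssim\rho^{\theta[(s-(1-\eta)A)_+-m\eta]}+\rho^{\theta(s-m)}.
\]
Optimising over $\eta$ and letting $A\downarrow\overline{\dim}_A^\eta E$, the total contribution of (iii) is $\lesssim\rho^{-\theta K_m(s)-o(1)}$. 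The second term is dominated by range (ii). One must check carefully that the exponents combine into exactly the expression in Definition~\ref{def:Km}, and that the supremum over $\eta$ can be handled by taking $\eta$ on a fine finite grid.

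Now choose $\lambda$. Term (i) is $\le r^{t-m}$ when $\rho^s\le\rho^{\lambda t}$, i.e. $t\le s/\lambda$. Term (ii) is $\le r^{t-m}$ when $t\le s$ and $\lambda\ge1$. Term (iii) needs $\lambda(m-t)\ge\theta K_m(s)$, i.e. $\lambda m-s\ge\theta K_m(s)$ when $t=s/\lambda$. Taking $\lambda=\max\{1,(s+\theta K_m(s))/m\}$ and $t=s/\lambda$ gives
\[
t=\min\Big\{s,\frac{ms}{s+\theta K_m(s)}\Big\}.
\]
Lemma~\ref{lem1} then yields $\ubd^mE\ge t$, with $\rho$ taken along the Frostman sequence, so $r=\rho^\lambda$ also runs along a sequence. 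Letting $s\uparrow\uid E$ and using the continuity ($1$-Lipschitz property) of $K_m$ gives the bound for each $\theta$. Taking the supremum over $\theta\in(0,1]$ proves \eqref{lowerbound-spectrum-upper}, and the same scheme with Lemma~\ref{Frostman}(2) gives \eqref{lowerbound-spectrum-lower}.
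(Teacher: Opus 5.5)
Your architecture is the paper's: its $r$ and $a$ are your $\rho$ and $\lambda$, with the same three-range split. Ranges (i), (ii) and the choice of $\lambda$ are fine. The gap is in range (iii), which you correctly single out as the crux.

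\textbf{The scale restriction is reversed.} The condition $\rho^\theta\le h^{1/\eta}$ is equivalent to $h\ge\rho^{\theta\eta}$, not $h\le\rho^{\theta\eta}$. So the spectrum at parameter $\eta$ bounds $N_{\rho^\theta}(B(x,h)\cap E)$ only for $h\in[\rho^{\theta\eta},1)$. On $(\rho^\theta,\rho^{\theta\eta})$ it says nothing. There the relevant parameter is the larger $\eta'$ with $h=\rho^{\theta\eta'}$, and $\overline{\dim}_A^{\eta'}E$ may exceed $\overline{\dim}_A^{\eta}E$.

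\textbf{The per-$\eta$ bookkeeping fails either way.} With your split, the trivial bound $\mu\le1$ on $[\rho^{\theta\eta},\infty)$ contributes $\frac1m\rho^{-\theta m\eta}$. This exceeds your claimed $\rho^{\theta[(s-(1-\eta)A)_+-m\eta]}$ by the factor $\rho^{-\theta(s-(1-\eta)A)_+}$. With the correct split, using monotonicity from the endpoint over all of $[\rho^\theta,\rho^{\theta\eta}]$ lets $h^{-m}$ grow to $\rho^{-\theta m}$. Hence no single $\eta$, however chosen, produces $K_m(s)$, which is a supremum over $\eta$.

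\textbf{What is needed.} You need a pointwise bound
\[
\mu(B(x,h))h^{-m}\lesssim\rho^{-\theta(K_m(s)+\epsilon)}\quad\text{for every } h\in[\rho^\theta,1],
\]
with the spectrum parameter adapted to $h$. The paper obtains it as follows.
\begin{enumerate}
\item Take $\eta_j=j/n$ with $m/n<\epsilon/2$, and set $t_j=\overline{\dim}_A^{j/n}E+\epsilon/2$.
\item Apply the spectrum at $\eta_j$ exactly at the admissible boundary pair $R=\rho^{\theta j/n}$, small scale $\rho^\theta=R^{1/\eta_j}$. This gives $\mu(B(x,\rho^{\theta j/n}))\lesssim\rho^{\theta(s-(1-j/n)t_j)_+}$.
\item Use monotonicity only downward, on the short interval $[\rho^{\theta(j+1)/n},\rho^{\theta j/n}]$. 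There $h^{-m}\le\rho^{-\theta m(j+1)/n}$, which costs only $\theta m/n$ in the exponent.
\item For $h\ge\rho^{\theta/n}$ the trivial bound $\mu\le 1$ suffices.
\end{enumerate}
Integrating against $\dih/h$ then loses only a factor $\log(1/\rho)$.

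\textbf{Two smaller adjustments.} The residual loss $\rho^{-\theta\epsilon}$ is polynomial, not subpolynomial. So you must put $\epsilon$ into $\lambda$, namely $\lambda=\max\{1,(s+\theta(K_m(s)+\epsilon))/m\}$, and let $\epsilon\to0$ together with $s\uparrow\uid E$. Also, the covering bound is for centres in $E$. For other $x$, use $B(x,h)\subset B(y,2h)$ with $y\in B(x,h)\cap E$, or bound the energy directly as the paper does. With range (iii) repaired in this way, the rest of your argument goes through as in the paper.
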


 \begin{proof}
    We only prove \eqref{lowerbound-spectrum-upper}; the proof of \eqref{lowerbound-spectrum-lower} is similar and is omitted. Fix $\theta\in(0,1]$. Without loss of generality, we assume
    $\uid E>0$ and let $0<s<\uid E$. According to Lemma
    \ref{Frostman}, there exists $c>0$ such that for arbitrarily small
    $0<r<1$ we can find $\mu_r\in\mP(E)$ satisfying
    \begin{equation}\label{spec-frostman}
        \mu_r(B(x,h))\leq c h^s
        \quad\text{for all $x\in\R^d$ and $r\leq h\leq r^\theta$.}
    \end{equation}
    Fix $\epsilon>0$, and let $n$ be an integer large enough such that $\frac{m}{n}<\frac{\epsilon}{2}$.
    For $1\leq j\leq n-1$, we write
    \begin{equation*}
        t_j=\overline{\dim}_A^{\frac{j}{n}}E+\frac{\epsilon}{2}.
    \end{equation*}
    According to the definition of the upper Assouad spectrum \eqref{defi-upperassouad}, there exists $c_j>0$ such that
    \begin{equation}\label{Nr}
        N_{r^\theta}(B(x,r^{\frac{\theta j}{n}})\cap E)
        \leq c_j\left(\frac{r^{\frac{\theta j}{n}}}{r^\theta}\right)^{t_j}
        \quad\text{for all $x\in E$ and $0<r<1$.}
    \end{equation}
    It follows from \eqref{spec-frostman} and \eqref{Nr} that
    \begin{equation*}
        \mu_r(B(x,r^{\frac{\theta j}{n}}))
        \leq\min\left\{1,c\,c_jr^{\theta(s-(1-\frac{j}{n})t_j)}\right\}
        \leq\max\{1,c\,c_j\}r^{\theta(s-(1-\frac{j}{n})t_j)_+}.
    \end{equation*}
    Then for all $r^{\frac{\theta(j+1)}{n}}\leq h\leq r^{\frac{\theta j}{n}}$, we have 
    \begin{align*}
    	\mu_r(B(x,h))h^{-m}
    	&\leq  \mu_r(B(x,r^{\frac{\theta j}{n}}))r^{-\frac{\theta(j+1)m}{n}}\\        
    	&\leq\max\{1,c\,c_j\}
    	r^{\theta(-\frac{m(j+1)}{n}+(s-(1-\frac{j}{n})t_j)_+)}\\
    	&\leq\max\{1,c\,c_j\}r^{-\theta(K_m(s)+\epsilon)},
    \end{align*}
    where the last inequality follows from
    \begin{align*}
    K_m(s)&\geq m\cdot \frac{j}{n}-\left(s-(1-\frac{j}{n})(t_j-\frac{\epsilon}{2})\right)_+\\
    &\geq m\cdot \frac{j}{n}-\left(s-(1-\frac{j}{n})t_j\right)_+-\frac{\epsilon}{2}\\
    &\geq  \frac{m(j+1)}{n}-\left(s-(1-\frac{j}{n})t_j\right)_+-\epsilon.
    \end{align*}
    For $r^{\frac{\theta}{n}}\leq h\leq1$, we have
    \begin{equation*}
        \mu_r(B(x,h))h^{-m}\leq h^{-m}
        \leq r^{-\frac{\theta m}{n}}\leq r^{-\theta(K_m(s)+\epsilon)}.
    \end{equation*}
    Thus we conclude that
 \begin{equation*}
	\mu_r(B(x,h))h^{-m}\leq c_0r^{-\theta(K_m(s)+\epsilon)}
	\quad\text{for all $x\in E$ and $r^\theta\leq h\leq1$.}
\end{equation*}    
    Here we take
     $c_0=\max\{1,c\,c_1,\ldots,c\,c_{n-1}\}$.
     Let
     \begin{equation*}
     	a=\max\left\{1,\frac{s+\theta(K_m(s)+\epsilon)}{m}\right\}.
     \end{equation*}
    Then
       \begin{align}\label{th3-eq1}
     mr^{am}	\int_{r^\theta}^{1}
     	\mu_r(B(x,h))h^{-m-1}\dih\leq c_0mr^{am-\theta(K_m(s)+\epsilon)} \cdot \log\frac{1}{r^\theta}\leq c_0mr^s\cdot\log\frac{1}{r^\theta}.
     \end{align}
    For $r^a\leq h\leq r$, it follows from \eqref{spec-frostman} that
    \begin{align}\label{th3-eq2}
    mr^{am}\int_{r^a}^r
    \mu_r(B(x,h))h^{-m-1}\dih\leq cmr^{am}\int_{r^a}^r
    r^sh^{-m-1}\dih\leq cr^s.
    \end{align}
     For $r\leq h\leq r^\theta$, it is easy to see that
     \begin{align*}
      r^{am}h^{s-m}\leq&\begin{cases}
      	r^{am+s-m}, \quad &s\leq m;\\
      	r^{am}\quad &s>m;
      \end{cases} \\
      \leq& r^s.
     \end{align*}
    It follows from \eqref{spec-frostman} that
      \begin{align}\label{th3-eq3}
    	mr^{am}\int_{r}^{r^{\theta}}
    	\mu_r(B(x,h))h^{-m-1}\dih\leq cmr^{am}\int_{r}^{r^{\theta}}
    h^{s-m-1}\dih\leq cm r^s\cdot \log(r^{\theta-1}).
    \end{align}
    Moreover,
     \begin{align}\label{th3-eq4}
    	mr^{am}\int_{1}^{\infty}
    	\mu_r(B(x,h))h^{-m-1}\dih\leq r^{am}\leq r^s.
    \end{align}
    According to \eqref{th3-eq1}-\eqref{th3-eq4}, we have
    for all $x\in E$,
    \begin{align*}
        \phi_{r^a}^m\ast\mu_r(x)
        =mr^{am}\int_{r^a}^\infty
        \mu_r(B(x,h))h^{-m-1}\dih
        \lesssim r^s\cdot(- \log r).
    \end{align*}
   Consequently,
    \begin{equation*}
        \ubd^m E\geq\frac{s}{a}
        =\min\left\{s,\frac{ms}{s+\theta(K_m(s)+\epsilon)}\right\}.
    \end{equation*}
    Letting $\epsilon$ tend to $0$ and $s$ tend to $\uid E$ gives
    \begin{equation*}
        \ubd^m E\geq
        \min\left\{\uid E,\frac{m\uid E}{\uid E+\theta K_m(\uid E)}\right\}.
    \end{equation*}
     By taking the supremum over $\theta\in(0,1]$, we  prove
    \eqref{lowerbound-spectrum-upper}.
 \end{proof}

To obtain a simpler bound and clarify the relationship between
box dimension profiles and the box dimension, the upper Assouad
spectrum, and the Assouad dimension, we set $\theta=1$ in
Theorem \ref{lowerbound-spectrum} and obtain the following corollary.

\begin{cor}\label{cor:assouad-spectrum-bound}
    Let $E\subset\R^d$ be non-empty and bounded, and let $m>0$ and $0<\xi<1$. Write
    \begin{equation*}
        \Delta_\xi=\max\left\{0,\overline{\dim}_A^\xi E-m,
        (\dim_A E-m)(1-\xi)\right\}.
    \end{equation*}
    Then
    \begin{equation*}
        \ubd^m E\geq \frac{m\ubd E}{m+\Delta_\xi}\quad\text{ and }\quad \lbd^m E\geq \frac{m\lbd E}{m+\Delta_\xi}.
    \end{equation*}
\end{cor}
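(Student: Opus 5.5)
The plan is to specialise Theorem~\ref{lowerbound-spectrum} to $\theta=1$ and then bound $K_m$ from above in terms of $\Delta_\xi$. We treat the upper profile; the lower one is identical, using \eqref{lowerbound-spectrum-lower} with $\lid$ at $\theta=1$, which is $\lbd E$. Write $s=\ubd E$. If $s=0$ there is nothing to prove, so assume $s>0$. Since $\overline{\dim}_1E=\ubd E$, taking $\theta=1$ in \eqref{lowerbound-spectrum-upper} gives
\[
\ubd^m E\geq \min\left\{s,\ \frac{ms}{s+K_m(s)}\right\}.
\]
Hence it suffices to prove two inequalities:
\[
s\geq \frac{ms}{m+\Delta_\xi}
\qquad\text{and}\qquad
s+K_m(s)\leq m+\Delta_\xi .
\]
The first is immediate from $\Delta_\xi\geq0$. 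The whole argument therefore reduces to the key estimate
\[
K_m(s)\leq m-s+\Delta_\xi .
\]

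This key estimate is the main step, and it splits according to the two terms in the definition of $K_m$. For the constant term $0$, we need $s\leq m+\Delta_\xi$. If $s\leq m$ this is trivial. If $s>m$, we use $\ubd E\leq\overline{\dim}_A^\xi E$ to get $\Delta_\xi\geq \overline{\dim}_A^\xi E-m\geq s-m$.

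For the supremum term, fix $\eta\in(0,1)$ and drop the positive part using $-(u)_+\leq -u$. This gives
\[
m\eta-\big(s-(1-\eta)\overline{\dim}_A^\eta E\big)_+\leq m-s+(1-\eta)\big(\overline{\dim}_A^\eta E-m\big).
\]
It therefore suffices to show $(1-\eta)(\overline{\dim}_A^\eta E-m)\leq\Delta_\xi$ for every $\eta\in(0,1)$.

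We split at $\xi$.
\begin{itemize}
\item If $\eta\leq\xi$, monotonicity of the upper Assouad spectrum gives $\overline{\dim}_A^\eta E\leq\overline{\dim}_A^\xi E$. Then $(1-\eta)(\overline{\dim}_A^\eta E-m)\leq \max\{0,\overline{\dim}_A^\xi E-m\}$, since $0<1-\eta<1$.
\item If $\eta>\xi$, we use $\overline{\dim}_A^\eta E\leq\dim_A E$. This gives $(1-\eta)(\overline{\dim}_A^\eta E-m)\leq (1-\eta)(\dim_A E-m)_+\leq(1-\xi)(\dim_A E-m)_+$.
\end{itemize}
Both right-hand sides are at most $\Delta_\xi$, which proves the key estimate.

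Combining the key estimate with the $\theta=1$ bound yields $\ubd^m E\geq ms/(m+\Delta_\xi)$. The only delicate points are the bookkeeping with positive parts and the case $s>m$ in the constant term. I do not expect a genuine obstacle beyond these.
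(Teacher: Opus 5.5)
Your proposal is correct and matches the paper's proof essentially step for step. You take $\theta=1$ in Theorem~\ref{lowerbound-spectrum} and drop the positive part to get $K_m(s)\le\max\{0,\,m-s+\sup_{\eta}(1-\eta)(\overline{\dim}_A^\eta E-m)\}$. You bound the supremum by $\Delta_\xi$ by splitting at $\xi$, using monotonicity of the spectrum and $\overline{\dim}_A^\eta E\le\dim_A E$. You handle the $0$ term via $\ubd E\le\overline{\dim}_A^\xi E$. All of these are exactly the paper's steps.
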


\begin{proof}
    Since the upper Assouad spectrum is non-decreasing and is bounded above by $\dim_A E$, we have for all $0<\eta<1$,
    \begin{align*}
    	 (1-\eta)(\overline{\dim}_A^\eta E-m)&\leq
    	 \begin{cases}
    	 	0, \quad&\text{if }\overline{\dim}_A^\eta E\leq m;\\
    	 	\overline{\dim}_A^\xi E-m	, \quad&\text{if }\overline{\dim}_A^\eta E> m \text{ and } \eta\leq \xi;\\
    	 (1-\xi)({\dim}_A  E-m)	, \quad&\text{if }\overline{\dim}_A^\eta E> m\text{ and } \eta> \xi;
    	 \end{cases}\\
    	&\leq\Delta_\xi.
    \end{align*}
  Then
    \begin{align}\label{remark1eq1}
        K_m(\ubd E)
        &\leq\max\left\{0,m-\ubd E+
        \sup_{0<\eta<1}(1-\eta)(\overline{\dim}_A^\eta E-m)\right\}\notag\\
        &\leq m-\ubd E+\Delta_\xi,
    \end{align}
    where the last inequality follows from
    \begin{equation*}
        m-\ubd E+\Delta_\xi
        \geq\overline{\dim}_A^\xi E-\ubd E\geq0.
    \end{equation*}
    Applying Theorem \ref{lowerbound-spectrum} at $\theta=1$ and using \eqref{remark1eq1}, we obtain
    \begin{align*}
        \ubd^m E
       \geq&\min\left\{\ubd E,
        \frac{m\ubd E}{\ubd E+K_m(\ubd E)}\right\}\\
        \geq&\min\left\{\ubd E,
        \frac{m\ubd E}{m+\Delta_\xi}\right\}\\
        =&\frac{m\ubd E}{m+\Delta_\xi}.
    \end{align*}
    
    The lower box dimension case follows by the same argument, using the lower version of Theorem \ref{lowerbound-spectrum}.
\end{proof}

\begin{remark}
    For $0<m\leq d$, Falconer, Fraser, and Shmerkin showed in \cite[Theorem~2.2]{FFS} that
    \begin{equation}\label{FFSbound}
        \ubd^m E\geq\ubd E-\Delta_\xi.
    \end{equation}
    Since
       \begin{equation*}
    	\frac{m\ubd E}{m+\Delta_\xi}-(\ubd E-\Delta_\xi)
    	=\frac{\Delta_\xi(m+\Delta_\xi-\ubd E)}{m+\Delta_\xi}\geq0,
    \end{equation*}
    Corollary \ref{cor:assouad-spectrum-bound} gives  a lower bound that is at least as strong as \eqref{FFSbound}.
   As a consequence of \eqref{FFSbound}, it was also shown in \cite{FFS} that $\ubd^m E=\ubd E$ if $\dim_{qA} E\leq m$.
\end{remark}

 Bounding the upper Assouad spectrum by the quasi-Assouad dimension
 yields the following result.

\begin{cor}
 	Let $E$ be a non-empty and bounded set in $\R^d$. If $\dim_{qA} E>m$, then
 	\begin{equation}\label{eq:quasi-threshold}
 		\ubd^m E\geq
 		\sup\left\{
 		\uid E:
 		0<\theta\leq1,\quad
 		\uid E\leq
 		\frac{m(1-\theta)\dim_{qA} E}
 		{\dim_{qA} E-m\theta}
 		\right\}.
 	\end{equation}
    The same conclusion holds with $\ubd^m E$ and $\uid E$ replaced by $\lbd^m E$ and $\lid E$, respectively. 
 \end{cor}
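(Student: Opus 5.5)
The plan is to derive \eqref{eq:quasi-threshold} from Theorem~\ref{lowerbound-spectrum}. The upper Assouad spectrum appearing in $K_m$ will be bounded by the quasi-Assouad dimension through \eqref{eq:Km-quasi}. Write $D=\dim_{qA}E>m$ and fix $\theta\in(0,1]$ with $u:=\uid E$ satisfying
\[
u\le \frac{m(1-\theta)D}{D-m\theta}.
\]
The target is $\ubd^m E\ge u$. Theorem~\ref{lowerbound-spectrum} gives $\ubd^mE\ge\min\{u,\,mu/(u+\theta K_m(u))\}$. So, for $u>0$ (the case $u=0$ being trivial), it suffices to check $u+\theta K_m(u)\le m$, that is, $\theta K_m(u)\le m-u$.

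First I would confirm $0<u\le D$, so that \eqref{eq:Km-quasi} applies. Indeed $u\le\ubd E\le D$; alternatively, one checks directly that $m(1-\theta)D/(D-m\theta)\le D$. Since $\overline{\dim}_A^\eta E\le D$ for all $\eta$, the function $K_m$ is bounded by its quasi-Assouad version, and \eqref{eq:Km-quasi} gives
\[
K_m(u)\le\max\Big\{m-u,\ \frac{m(D-u)}{D}\Big\}.
\]
Because $D>m$, we have $m(D-u)/D-(m-u)=u(D-m)/D\ge 0$, so the maximum is $m(D-u)/D$.

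It remains to verify $\theta m(D-u)/D\le m-u$. This rearranges to $u(D-m\theta)\le mD(1-\theta)$. Since $D-m\theta>0$ (as $D>m\ge m\theta$), this is exactly the assumed threshold inequality. Hence $\ubd^mE\ge u$, and taking the supremum over admissible $\theta$ gives \eqref{eq:quasi-threshold}. The lower version follows in the same way from \eqref{lowerbound-spectrum-lower}, using $\lid E\le\lbd E\le D$.

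The only delicate point I anticipate is the hypothesis $s\le\dim_{qA}E$ under which \eqref{eq:Km-quasi} was derived. I would double-check that derivation: replacing $\overline{\dim}_A^\eta E$ by the larger $D$ only increases $(s-(1-\eta)\cdot)_+$ in a way that decreases the bracket. The direction of this inequality needs care. Since $-(s-(1-\eta)t)_+$ is non-decreasing in $t$, replacing $t=\overline{\dim}_A^\eta E$ by $D\ge t$ gives an \emph{upper} bound on $K_m$, which is the direction needed.
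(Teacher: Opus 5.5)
Your proposal is correct and follows the paper's proof. Both apply Theorem~\ref{lowerbound-spectrum} with the same $\theta$, bound $K_m(\uid E)$ by $m(\dim_{qA}E-\uid E)/\dim_{qA}E$ via \eqref{eq:Km-quasi} (the dominant term because $\dim_{qA}E>m$), and note that the resulting lower bound is at least $\uid E$ exactly under the stated threshold. You also check explicitly that $\uid E\le\ubd E\le\dim_{qA}E$, which \eqref{eq:Km-quasi} requires, and that replacing the spectrum by $\dim_{qA}E$ bounds $K_m$ from above; the paper leaves both points implicit.
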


 \begin{proof}
 	We only prove \eqref{eq:quasi-threshold}.
 It follows from \eqref{eq:Km-quasi} that for  all $\theta\in(0,1]$,  
    \begin{equation*}
        \frac{ms}{s+\theta K_m(s)}
        \geq\frac{ms\dim_{qA} E}{s\dim_{qA} E+m\theta(\dim_{qA} E-s)}.
    \end{equation*}
     It is easy to see that
      \begin{equation*}
     	\frac{ms\dim_{qA} E}{s\dim_{qA} E+m\theta(\dim_{qA} E-s)}\geq s
     	\quad\Longleftrightarrow\quad
     	s\leq\frac{m\dim_{qA} E(1-\theta)}{\dim_{qA} E-m\theta}.
     \end{equation*}
   Thus we conclude \eqref{eq:quasi-threshold}.
 \end{proof}

 \begin{thmm}\label{thm:initial-growth-profiles}
 	Let $E\subset\R^d$ be a non-empty and bounded set with positive upper box dimension.  Let
 	\[
 	\kappa:=\liminf_{\theta\downarrow0}\frac{\uid E}{\theta}
 	\quad\text{and}\quad
 	N:=\frac{\kappa\ubd E}{\kappa-\ubd E}.
 	\]
 	If $N\geq\dim_{qA} E$, then 
 	\begin{equation}\label{eq:initial-growth-exact}
 		\ubd^m E
 		=\inf_{\theta\in(0,1]}
 		\frac{m\uid E}{m\theta+(1-\theta)\uid E}
 		=\min\left\{\ubd E,\frac{m\kappa}{m+\kappa}\right\}.
 	\end{equation}
 	Here $\frac{\kappa\ubd E}{\kappa-\ubd E}$ and $\frac{m\kappa}{m+\kappa}$ are  interpreted as $\ubd E$ and  $m$ respectively when
 	$\kappa=\infty$.
 	The same conclusions hold with $\uid E$, $\ubd E$, and
 	$\ubd^m E$ replaced throughout by $\lid E$, $\lbd E$, and
 	$\lbd^m E$, respectively, with $\kappa$ and $N$ defined
 	using the corresponding lower dimensions.
 \end{thmm}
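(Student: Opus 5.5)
The plan is to squeeze the middle quantity in \eqref{eq:initial-growth-exact} between $\ubd^m E$ and $\min\{\ubd E,\frac{m\kappa}{m+\kappa}\}$. On one side, Theorem~\ref{upperbound2} gives $\ubd^m E\le \inf_{\theta}\frac{m\uid E}{m\theta+(1-\theta)\uid E}$. On the other side, the infimum is at most $\min\{\ubd E,\frac{m\kappa}{m+\kappa}\}$, for two reasons. Taking $\theta=1$ gives the value $\ubd E$. Taking a sequence $\theta_j\downarrow0$ with $\uid E/\theta_j\to\kappa$ gives
\[
\frac{m\uid E}{m\theta_j+(1-\theta_j)\uid E}=\frac{m(\uid E/\theta_j)}{m+(1-\theta_j)(\uid E/\theta_j)}\longrightarrow\frac{m\kappa}{m+\kappa},
\]
with the obvious reading ($\to m$) when $\kappa=\infty$. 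So it remains to prove the lower bound $\ubd^m E\ge\min\{\ubd E,\frac{m\kappa}{m+\kappa}\}$, and this is where the hypothesis $N\ge\dim_{qA}E$ enters. I would deduce it from Theorem~\ref{lowerbound-spectrum}, estimating $K_m$ through \eqref{eq:Km-quasi} with $\dim_{qA}E$ replaced by the larger number $N$. This replacement is legitimate because $K_m(s)$ is non-decreasing in the spectrum values and $\overline{\dim}_A^\eta E\le\dim_{qA}E\le N$.

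If $N\le m$ (equivalently $\frac{m\kappa}{m+\kappa}\ge\ubd E$), the target is $\ubd E$. Here $\dim_{qA}E\le m$, and Corollary~\ref{cor:assouad-spectrum-bound} with $\Delta_\xi\to0$ already gives $\ubd^m E\ge\ubd E$. This reproduces \eqref{intro2}.

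In the main case $N>m$, I would write $D:=N$ and $s=\uid E$. From \eqref{eq:Km-quasi} one gets $K_m(s)\le m(D-s)/D$ as soon as $s\le D(1-\frac{?}{})$; more precisely, $m(D-s)/D\ge m-s$ exactly when $D\ge m$, which holds. The lower bound of Theorem~\ref{lowerbound-spectrum} then becomes
\[
\sup_\theta\min\Big\{s(\theta),\ \frac{msD}{sD+m\theta(D-s)}\Big\},\qquad s(\theta)=\uid E .
\]
I then need a $\theta$ at which both entries are at least $\frac{m\kappa}{m+\kappa}-\epsilon$.
\begin{itemize}
\item The first entry requires $\uid E\ge \frac{m\kappa}{m+\kappa}$.
\item With the second entry, the lower bound takes the form $\frac{m\,(s/\theta)}{s/\theta+m-ms/D}$. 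This is increasing in $s/\theta$ and in $s$.
\end{itemize}
The key auxiliary input is therefore a quantitative lower bound on $\uid E$ for intermediate $\theta$, in terms of $\kappa$ and $\ubd E$. I would try the interpolation bound
\[
\uid E\ \ge\ \frac{\kappa\theta\,\ubd E}{\ubd E+\kappa\theta-\theta\ubd E}\;\Big(=\frac{N\theta}{?}\text{-type}\Big).
\]
Its natural form, after the definition of $N$, is $\frac{1}{\uid E}\le \frac{1}{\ubd E}+\frac{1}{\kappa}\big(\frac1\theta-1\big)$, i.e.\ $1/\uid E$ lies below the line joining the behaviour at $0$ (slope $1/\kappa$ in $1/\theta$) to the value $1/\ubd E$ at $\theta=1$. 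I would prove this by a covering argument, comparing scales $r^\theta$ and $r^\phi$ as in the proof of continuity of $\uid$ in \cite{FFK}. Specifically, I would show $\theta\mapsto \theta\big(\frac{1}{\uid E}-\frac1{\ubd E}\big)$ is controlled through the definition of $\kappa$ as a liminf, likely by showing that $\frac{1}{\uid E}-\frac{1}{\ubd E}$ is at most $\frac{1-\theta}{\theta}\cdot\frac{1}{\kappa}$ via a scaling/subadditivity property of the optimal cover exponents.

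Plugging this bound into the displayed expression, I would choose $\theta$ so that $\uid E$ is as close as needed to $\frac{m\kappa}{m+\kappa}$, which is possible since $\uid E$ is continuous and ranges up to $\ubd E>\frac{m\kappa}{m+\kappa}$. I would then check algebraically that the second entry is at least $\frac{m\kappa}{m+\kappa}$ precisely when $D\le N$; this is where $N=\frac{\kappa\ubd E}{\kappa-\ubd E}$ should appear as the exact threshold. Letting $\epsilon\to0$ gives the lower bound, and with it all three equalities. The lower-dimension version follows verbatim from \eqref{upperbound2-lower} and \eqref{lowerbound-spectrum-lower}.

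The main obstacle is the interpolation inequality for $\uid E$. The liminf defining $\kappa$ only controls small $\theta$, and turning it into a bound at moderate $\theta$ requires a genuine structural property of intermediate dimensions rather than any result stated earlier in the paper. If that inequality fails in this form, the fallback is to keep $\theta$ small, where $\uid E\approx\kappa\theta$. Then, however, the first entry $\min\{s,\cdot\}$ is small, so one must instead exploit Theorem~\ref{main}/\eqref{eq4} directly with Frostman measures at two ranges of scales.
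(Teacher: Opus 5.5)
\textbf{The lower bound for $m<N$ has a gap, but one that closes at once.} Your reduction matches the paper's. The upper bound comes from Theorem~\ref{upperbound2}, using $\theta=1$ and a sequence $\theta_j\downarrow0$. The lower bound comes from Theorem~\ref{lowerbound-spectrum}, after bounding $K_m(s)\le\max\{m-s,\,m(N-s)/N\}$ via \eqref{eq:Km-quasi} with $\dim_{qA}E$ replaced by $N$.

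For the case $m\ge N$, you use Corollary~\ref{cor:assouad-spectrum-bound} with $\Delta_\xi\to0$. This is legitimate, since $\overline{\dim}_A^\xi E\le\dim_{qA}E\le m$ and $\dim_A E\le d$. It is a valid alternative to the paper's route, which uses monotonicity of $m\mapsto\ubd^m E$ together with $k\nearrow N$.

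In the main case $m<N$, however, your argument as written is incomplete. It rests on the interpolation inequality $\uid E\ge\frac{\kappa\theta\,\ubd E}{\kappa\theta+(1-\theta)\ubd E}$ at intermediate $\theta$, which you do not prove. This inequality is equivalent to $\uid E\ge\frac{\theta N\ubd E}{N-(1-\theta)\ubd E}$, a Falconer--Fraser--Kempton-type bound with $N$ in the role of the Assouad dimension, and it would need its own covering argument. Your fallback is only a sketch.

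\textbf{The missing observation: take $\theta=1$.} No intermediate $\theta$ is needed; take $\theta=1$ in Theorem~\ref{lowerbound-spectrum}. Write $B=\ubd E$ and use $B/N=1-B/\kappa$. The second entry of the minimum becomes
\[
\frac{mNB}{NB+m(N-B)}=\frac{mB}{B+m-mB/N}=\frac{mB}{B+mB/\kappa}=\frac{m\kappa}{m+\kappa},
\]
which equals $m$ when $\kappa=\infty$. The first entry is $B$, and $B\ge\frac{m\kappa}{m+\kappa}$ precisely because $m\le N$.

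In your own notation: the second entry is at least $\frac{m\kappa}{m+\kappa}$ if and only if $\uid E\ge\frac{\kappa\theta B}{\kappa\theta+(1-\theta)B}$. At $\theta=1$ this reduces to the identity $B\ge B$, so your interpolation inequality is only needed where it is trivial. This is exactly the paper's proof. With that substitution the ``main obstacle'' and the fallback both disappear, and the rest of your plan goes through unchanged.
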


 \begin{proof}
    We only prove 
    \eqref{eq:initial-growth-exact}. By the definition of $\kappa$, there exists a sequence
    $\{\theta_j\}_j$ tending to $0$ such that
    \begin{equation*}
        \frac{\overline{\dim}_{\theta_j}E}{\theta_j}\longrightarrow\kappa.
    \end{equation*}
    It follows that
    \begin{equation*}
        \frac{m\overline{\dim}_{\theta_j}E}
        {m\theta_j+(1-\theta_j)\overline{\dim}_{\theta_j}E}
        \longrightarrow\frac{m\kappa}{m+\kappa} \quad \text{as $j$ tends to $\infty$}.
    \end{equation*}
     According to  Theorem \ref{upperbound2}, we have   
    \begin{equation}\label{eq:initial-growth-upper}
        \ubd^m E
        \leq\inf_{\theta\in(0,1]}
        \frac{m\uid E}{m\theta+(1-\theta)\uid E}
        \leq\min\left\{\ubd E,\frac{m\kappa}{m+\kappa}\right\}.
    \end{equation}

    Suppose first that $0<m<N$. 
    It follows from 
     \eqref{eq:Km-quasi} that 
     \begin{align*}
     K_m(\ubd E)\leq&\max\left\{\frac{m(\dim_{qA} E-\ubd E)}{\dim_{qA} E},m-\ubd E\right\}\\
     \leq&\frac{m(N-\ubd E)}{N}\quad\text{(by $\dim_{qA}E\leq N$)}.
     \end{align*}
    Applying Theorem
    \ref{lowerbound-spectrum} at $\theta=1$, we therefore obtain
    \begin{align}\label{eq:initial-growth-lower}
        \ubd^m E
        &\geq\min\left\{\ubd E,\frac{m\ubd E}{\ubd E+K_m(\ubd E)}\right\}\notag\\
        &\geq\min\left\{\ubd E,\frac{mN\ubd E}{N\ubd E+m(N-\ubd E)}\right\}\notag\\
        &=\frac{mN\ubd E}{mN+(N-m)\ubd E}\notag\\
        &=\frac{m\kappa}{m+\kappa}.
    \end{align}
    Now we assume  $m\geq N$. Let  $0<k<N\leq m$. Then it follows from \eqref{eq:initial-growth-lower} that
    \begin{equation*}
        \ubd^m E\geq\ubd^k E
        \geq\frac{kN\ubd E}{kN+(N-k)\ubd E}.
    \end{equation*}
    Letting $k\nearrow N$ yields 
    \begin{equation}\label{th3eq1}
    	\ubd^m E\geq \ubd E=\min\left\{\ubd E,\frac{m\kappa}{m+\kappa}\right\}.
    \end{equation}
    Combining \eqref{eq:initial-growth-upper}-\eqref{th3eq1} gives
    \eqref{eq:initial-growth-exact}.
 \end{proof}
 
 \begin{remark}
 	 If $\kappa=0$, then it follows from \eqref{eq:initial-growth-upper} that
 	$\ubd^m E=0$ for all $m>0$.
 	Moreover, under the hypotheses of Theorem \ref{thm:initial-growth-profiles},
 	if $\kappa<\infty$, then 
 	\[
 	\ubd^m E=\frac{m\kappa}{m+\kappa}<\min\{m,\ubd E\}
 	\qquad\text{whenever }0<m<N.
 	\]
 \end{remark}

\begin{remark}\label{rem:initial-growth-hypothesis}
    The hypothesis $N\geq\dim_{qA}E$ in
    Theorem \ref{thm:initial-growth-profiles} cannot in general be omitted. Indeed, let $ E=A\times F_p$ be the set
    from Example \ref{ex}. Then for $0<\theta\leq1$, we have
    \[
       \dim_\theta E=c+\frac{\theta}{p+\theta}
        \quad \text{and} \quad
         \dim_B E=c+\frac{1}{p+1}.
    \]
    Thus 
    $\kappa=\infty$ and $N= \dim_B E$. If the hypothesis were omitted,
    Theorem \ref{thm:initial-growth-profiles} would give
    $$\ubd^1 E=\min\{1,\dim_B E\},$$which contradicts the strict inequality in Example \ref{ex}.
\end{remark}

	We now present two examples in which Theorem
	\ref{thm:initial-growth-profiles} yields explicit formulas
	for box dimension profiles. 
\begin{ex}\label{ex:inverted-lattice-profiles}
    Let $p>0$, $d\in\mathbb N$ and $0<m< d$. Let
    \begin{equation*}
        G_{p,d}:=\left\{\frac{x}{\|x\|^2}:
        x\in\{1^p,2^p,3^p,\ldots\}^d\right\}
    \end{equation*}
    be the inversion of the lattice $\{1^p,2^p,3^p,\ldots\}^d$ in the unit sphere in $\R^d$.
    By \cite[Proposition 3.8]{BF}, for every $0<\theta\leq1$,
    \begin{equation*}
        \uid G_{p,d}=\lid G_{p,d}=\frac{d\theta}{p+\theta}.
    \end{equation*}
    Consequently,
    \begin{equation*}
        \dim_B  G_{p,d}=\frac{d}{p+1},
        \qquad
        \kappa=\lim_{\theta\downarrow0}\frac{\dim_\theta G_{p,d}}{\theta}
        =\frac{d}{p}.
    \end{equation*}
    It follows that
    \begin{equation*}
        N=\frac{\kappa\dim_B G_{p,d}}{\kappa-\dim_B G_{p,d}}
        =d\geq\dim_{qA}G_{p,d}.
    \end{equation*}
    According to Theorem \ref{thm:initial-growth-profiles}, we have
\begin{equation*}
        \dim_B^m G_{p,d}
        =\min\left\{\frac{d}{p+1},\frac{dm}{d+pm}\right\}
        =\frac{dm}{d+pm}<\min\{m,  \dim_B  G_{p,d}\}.
    \end{equation*}
\end{ex}

 Example
 \ref{ex:inverted-lattice-profiles} exhibits a dimension drop,
 whereas  the following example \ref{ex:product-geometric-profiles}
 illustrates dimension preservation.

\begin{ex}\label{ex:product-geometric-profiles}
    Let $p>0$ and let
    \begin{equation*}
        F_p=\{0\}\cup\{n^{-p}:n\in\mathbb N\},
        \qquad
        G=\{0\}\cup\{2^{-n}:n\in\mathbb N\},
        \qquad E=G\times F_p.
    \end{equation*}
    For $0<r<R<1$, it is not difficult to see that
    \begin{equation*}
      \sup_xN_r(B(x,R)\cap G)\lesssim 1+\log\frac{R}{r}.
    \end{equation*}
    Thus  $\dim_A G=0$.
 An argument similar to \eqref{ex1eq1} gives that
   $$\dim_\theta E   =\frac{\theta}{p+\theta}.$$
    Thus,
    \begin{equation*}
        \dim_B E=\frac{1}{p+1},
        \qquad
        \kappa=\lim_{\theta\downarrow0}\frac{\dim_\theta E}{\theta}
        =\frac{1}{p},
        \qquad
        N=\frac{\kappa \dim_B E}{\kappa- \dim_B E}=1.
    \end{equation*}
    {
    By \cite[Theorem~2.1]{Fr14},  we have
    \begin{equation*}
        \dim_{A} F_p  \leq \dim_A E
        \leq\dim_A F_p+\dim_A G=\dim_A F_p.
    \end{equation*}
    }
    {Thus  $\dim_{qA}E\leq\dim_A E=\dim_A F_p\leq1=N$. Hence}
    according to Theorem \ref{thm:initial-growth-profiles} that 
\begin{equation*}
        \lbd^1 E=\ubd^1 E=\frac{1}{p+1}=\dim_B E.
    \end{equation*}
\end{ex}

\noindent  \textbf{Acknowledgements:} 
  Most of this work was completed while the author was a postdoc at Fudan University. The author acknowledges financial support from Professor Weixiao Shen's research grant, including the New Cornerstone Science Foundation through the New Cornerstone Investigator Program.
  
  \noindent  \textbf{Declaration of AI Use:} ChatGPT was used to assist with Section \ref{sec:proof}, refining some of the conclusions, as well as for language editing and identifying typographical errors throughout the manuscript. 
  More specifically, while the upper bound estimate in Proposition \ref{thm:upper} was derived independently by the authors,   ChatGPT pointed out that this bound was suboptimal (see Remark \ref{remark1}). Consequently, Theorems \ref{upperbound2} and \ref{thm:initial-growth-profiles} were formulated with the assistance of ChatGPT.
  The mathematical proofs in Sections \ref{sec-intermediate} and \ref{sec-profile} were completed independently by the authors.

\bibliographystyle{plain} 

\end{document}